\renewcommand{\cite}{\citep}
\newtheorem{theorem}{Theorem}
\newtheorem{proposition}[theorem]{Proposition}
\newdefinition{remark}{Remark}
\newdefinition{result}{Result}
\newdefinition{definition}[theorem]{Definition}
\newtheorem{example}[theorem]{Example}
\DeclareMathOperator{\tcirc}{\texttt{circ}}
\DeclareMathOperator{\tspan}{\texttt{span}}
\DeclareMathOperator{\tabs}{\texttt{abs}}
\DeclareMathOperator{\tvec}{\texttt{vec}}
\DeclareMathOperator{\rank}{rank}%
\DeclareMathOperator{\diag}{diag}%
\providecommand{\ii}{\imath}
\providecommand{\kron}{\otimes}
\providecommand{\eqdef}{\equiv}
\providecommand{\RR}{\mathbb{R}}
\providecommand{\KK}{\mathbb{K}}
\providecommand{\CC}{\mathbb{C}}
\providecommand{\normof}[2][]{\left\| #2 \right\|_{#1}}%
\providecommand{\itn}[1]{^{(#1)}}%
\providecommand{\absof}[1]{\left| #1 \right|}
\newcommand{\niprod}[2]{\langle {#1}, {#2} \rangle}
\newcommand{\iprod}{\niprod}
\newcommand{\conj}[1]{\overline{#1}}
\providecommand{\bmat}[1]{\begin{bmatrix} #1 \end{bmatrix}}
\providecommand{\sbmat}[1]{\left[\begin{smallmatrix} #1 \end{smallmatrix}\right]}
\DeclareMathOperator{\sign}{sign}
\providecommand{\mat}{\boldsymbol}
\renewcommand{\vec}{\mathbf}
\providecommand{\eye}{\mat{I}}
\providecommand{\mA}{\ensuremath{\mat{A}}}
\providecommand{\mC}{\ensuremath{\mat{C}}}
\providecommand{\mD}{\ensuremath{\mat{D}}}
\providecommand{\mF}{\ensuremath{\mat{F}}}
\providecommand{\mG}{\ensuremath{\mat{G}}}
\providecommand{\mH}{\ensuremath{\mat{H}}}
\providecommand{\mI}{\ensuremath{\mat{I}}}
\providecommand{\mP}{\ensuremath{\mat{P}}}
\providecommand{\mQ}{\ensuremath{\mat{Q}}}
\providecommand{\mR}{\ensuremath{\mat{R}}}
\providecommand{\mT}{\ensuremath{\mat{T}}}
\providecommand{\mX}{\ensuremath{\mat{X}}}
\providecommand{\mY}{\ensuremath{\mat{Y}}}
\providecommand{\mhat}[1]{\ensuremath{\mat{\hat{#1}}}}
\providecommand{\vhat}[1]{\ensuremath{\vec{\hat{#1}}}}
\providecommand{\mAhat}{\mhat{A}}
\providecommand{\vxhat}{\vhat{x}}
\providecommand{\vb}{\ensuremath{\vec{b}}}
\providecommand{\ve}{\ensuremath{\vec{e}}}
\providecommand{\vf}{\ensuremath{\vec{f}}}
\providecommand{\vq}{\ensuremath{\vec{q}}}
\providecommand{\vu}{\ensuremath{\vec{u}}}
\providecommand{\vv}{\ensuremath{\vec{v}}}
\providecommand{\vx}{\ensuremath{\vec{x}}}
\providecommand{\vy}{\ensuremath{\vec{y}}}
\providecommand{\vz}{\ensuremath{\vec{z}}}
\newcommand{\fftm}{\mF}
\DeclareMathOperator{\fft}{\texttt{cft}}
\DeclareMathOperator{\ifft}{\texttt{icft}}
\DeclareMathOperator{\icft}{\texttt{icft}}
\DeclareMathOperator{\cft}{\texttt{cft}}
\newcommand{\ceilof}[1]{\lceil #1 \rceil}
\renewcommand{\circeq}{\leftrightarrow}
\newcommand{\cel}[1]{\ushort{#1}}
\newcommand{\csbmat}[1]{\left\{\! \begin{smallmatrix} #1
\end{smallmatrix} \! \right\}}
\newcommand{\smallmath}[1]{\begin{smallmatrix}#1\end{smallmatrix}}
\newcommand{\celm}[1]{\cel{\mat{#1}}}
\newcommand{\celv}[1]{\cel{\vec{#1}}}
\newcommand{\calpha}{\ensuremath{\cel{\alpha}}}
\newcommand{\cbeta}{\ensuremath{\cel{\beta}}}
\newcommand{\clambda}{\ensuremath{\cel{\lambda}}}
\newcommand{\cone}{\ensuremath{\cel{1}}}
\newcommand{\cx}{\ensuremath{\cel{x}}}
\newcommand{\cA}{\ensuremath{\cel{A}}}
\newcommand{\cI}{\ensuremath{\cel{I}}}
\newcommand{\cva}{\ensuremath{\celv{a}}}
\newcommand{\cvb}{\ensuremath{\celv{b}}}
\newcommand{\cve}{\ensuremath{\celv{e}}}
\newcommand{\cvf}{\ensuremath{\celv{f}}}
\newcommand{\cvq}{\ensuremath{\celv{q}}}
\newcommand{\cvu}{\ensuremath{\celv{u}}}
\newcommand{\cvx}{\ensuremath{\celv{x}}}
\newcommand{\cvy}{\ensuremath{\celv{y}}}
\newcommand{\cvz}{\ensuremath{\celv{z}}}
\providecommand{\cmA}{\ensuremath{\celm{A}}}
\providecommand{\cmH}{\ensuremath{\celm{H}}}
\providecommand{\cmI}{\ensuremath{\celm{I}}}
\providecommand{\cmQ}{\ensuremath{\celm{Q}}}
\providecommand{\cmX}{\ensuremath{\celm{X}}}
\newcommand{\mLambdahat}{\mat{\hat{\Lambda}}}
\newcommand{\mXhat}{\mat{\hat{X}}}
\newcommand{\mRhat}{\mat{\hat{R}}}
\newcommand{\mYhat}{\mat{\hat{Y}}}
\newcommand{\mZhat}{\mat{\hat{Z}}}
\newcommand{\mQhat}{\mat{\hat{Q}}}
\newcommand{\mHhat}{\mat{\hat{H}}}
\newcommand{\mBhat}{\mat{\hat{B}}}
\newcommand{\vyhat}{\vhat{y}}
\newcommand{\vahat}{\vhat{a}}
\providecommand{\cmLambda}{\ensuremath{\celm{\Lambda}}}
\newcommand{\Matlab}{\textsc{Matlab}\xspace}
\DeclareMathSymbol{\minus}{\mathord}{operators}{"2D}
\DeclareMathOperator{\tangle}{\texttt{angle}}
\begin{document}

\begin{frontmatter}

\title{The power and Arnoldi methods in an algebra of circulants}
\author{David F.~Gleich\footnotemark[1]\footnotemark[2]}
\address{Sandia National Labs\footnotemark[3], Livermore, CA, United States}
\ead{dfgleic@sandia.gov}

\author{Chen Greif\footnotemark[2]}
\ead{greif@cs.ubc.ca}

\author{James M.~Varah\footnotemark[2]}
\address{The University of British Columbia, Vancouver, BC, Canada}
\ead{varah@cs.ubc.ca}

\begin{abstract}
Circulant matrices play a central role in a recently proposed
formulation of three-way data computations.
In this
setting, a three-way table corresponds to a matrix where each
``scalar'' is a vector of parameters defining a circulant.
This interpretation
provides many generalizations of results from matrix or
vector-space algebra. We derive the power and Arnoldi methods
in this algebra.
In the course of our derivation, we define inner products, norms, and other notions.
These extensions are straightforward in an
algebraic sense, but the implications are dramatically different
from the standard matrix case. For example, a matrix of
circulants has a polynomial number of eigenvalues in its
dimension; although, these can all be represented by a
carefully chosen canonical set of eigenvalues and vectors.
These results and algorithms are closely related to standard decoupling
techniques on block-circulant matrices using the fast Fourier transform.
\end{abstract}

\begin{keyword}
 block-circulant \sep circulant module \sep tensor
 \sep FIR matrix algebra \sep power method \sep Arnoldi process
\end{keyword}

\renewcommand{\thefootnote}{\fnsymbol{footnote}}
\footnotetext[1]{\scriptsize Corresponding author.  
Half of this author's work was conducted at the University
of British Columbia.}
\footnotetext[2]{\scriptsize The work of this author was
 supported in part by the Natural Sciences and Engineering Research
 Council of Canada}
\footnotetext[3]{\scriptsize Sandia National Laboratories is a multi-program laboratory 
managed and operated by Sandia Corporation, a wholly owned subsidiary of 
Lockheed Martin Corporation, for the U.S. Department of Energy's National 
Nuclear Security Administration under contract DE-AC04-94AL85000.}

\end{frontmatter}

\section{Introduction}

We study iterative algorithms in a circulant algebra, which is
a recent proposal for a set of operations that generalize matrix
algebra to three-way data \cite{kilmer2008-circ-tensor-svd}.
In particular, we extend this algebra
with the ingredients required for iterative methods such as the
power method and Arnoldi method, and study the behavior of
these two algorithms.

Given an $m \times n \times k$ table of data, we view this data
as an $m \times n$ matrix where each ``scalar'' is a vector of
length $k$.  We denote the space of length-$k$ scalars as $\KK_k$.
These scalars interact like circulant matrices.
Circulant matrices are a commutative, closed class under the standard matrix operations.
Indeed, $\KK_k$ is the ring of circulant matrices, where we
identify each circulant
matrix with the $k$ parameters defining it.

Formally, let $\calpha \in \KK_k$.
Elements in the circulant algebra are denoted by an underline to distinguish
them from regular scalars.  When an element is written with an explicit
parameter set, it is denoted by braces, for example $$\calpha =
\csbmat{ \alpha_1 & \ldots & \alpha_k }.$$ In what follows, we will use the notation $\circeq$ to provide an equivalent matrix-based notation for an operation involving $\KK_k$.
We define the operation $\tcirc(\cdot)$
as the ``circulant matrix representation'' of a scalar:
\begin{equation} \label{eq:circ-op-start}
 \calpha \quad \circeq \quad \tcirc(\calpha)  \eqdef
\bmat{ \alpha_1 & \alpha_{k} & \ldots & \alpha_2 \\
        \alpha_2 & \alpha_1 & \ddots & \vdots \\
        \vdots & \ddots & \ddots & \alpha_k \\
        \alpha_k & \ldots &\alpha_2 & \alpha_1} .
\end{equation}				
Let $\calpha$ be as above, and also let  $\cbeta \in \KK_k$.
The basic addition  and multiplication operations between scalars are then
\begin{equation}
 \calpha + \cbeta \circeq \tcirc(\calpha) + \tcirc(\cbeta)
\quad \text{ and } \quad \calpha \circ \cbeta \circeq \tcirc(\calpha) \tcirc(\cbeta).
\end{equation}
We use here a special symbol, the $\circ$ operation, to denote the product between these scalars, highlighting the difference from the standard matrix product.
Note that the element
\[ \cel{1} = \csbmat{ 1 & 0 & \ldots & 0 } \] is
the multiplicative identity.

Operations between vectors and matrices have similar, matricized, expressions. 
We use $\KK_k^{n}$ to denote the space of length-$n$ vectors where each
component is a $k$-vector in $\KK_k$, and  $\KK_k^{m \times n}$ to denote the space of $m \times n$ 
matrices of these $k$-vectors.  Thus, we identify each $m \times n \times k$ table 
with an element of $\KK_k^{m \times n}$.  Let $\cmA \in \KK_k^{m \times n}$ and $\cvx \in \KK_k^{n}$.  Their product is:
\begin{equation}
 \cmA \circ \cvx =
  \sbmat{ \sum_{j=1}^n \cel{A}_{1,j} \;\circ\; \cel{x}_j \\
                \vdots \\
                \sum_{j=1}^n \cel{A}_{m,j} \;\circ\; \cel{x}_j }
  \circeq
  \sbmat{ \tcirc( \cel{A}_{1,1} ) & \ldots & \tcirc( \cel{A}_{1,n} ) \\
                 \vdots & \ddots & \vdots \\
                 \tcirc( \cel{A}_{m,1} ) & \ldots & \tcirc( \cel{A}_{m,n} ) }
  \sbmat{ \tcirc( \cel{x}_{1} ) \\
                \vdots \\
                \tcirc( \cel{x}_{n} ) } .
\end{equation}
Thus, we extend the operation $\tcirc$ to matrices and vectors of $\KK_k$ scalars so that
\begin{equation}
 \tcirc(\cmA) \eqdef
\sbmat{ \tcirc( \cel{A}_{1,1} ) & \ldots & \tcirc( \cel{A}_{1,n} ) \\
                 \vdots & \ddots & \vdots \\
                 \tcirc( \cel{A}_{m,1} ) & \ldots & \tcirc( \cel{A}_{m,n} ) }
\quad \text{ and } \quad
\tcirc(\cvx) \eqdef
  \sbmat{ \tcirc( \cel{x}_{1} ) \\
                \vdots \\
                \tcirc( \cel{x}_{n} ) }.
\end{equation}
The definition of the product can now be compactly written as
\begin{equation} \label{eq:circ-op-end}
 \cmA \circ \cvx \circeq \tcirc(\cmA) \tcirc(\cvx).
\end{equation}
Of course this notation also holds for the special case of scalar-vector multiplication.  Let $\calpha \in \KK_k$.  Then
\[ \cvx \circ \calpha \circeq \tcirc(\cvx) \tcirc(\calpha). \]

The above operations define the basic computational routines to treat 
$m \times n \times k$  arrays as $m \times n$ matrices of $\KK_k$.  
They are equivalent to those proposed by \citet{kilmer2008-circ-tensor-svd}, 
and they constitute a module over vectors composed of circulants, as 
shown recently in~\citet{braman201x-tensor-eigenvalues}.  Based on this 
analysis, we term the set of operations the \emph{circulant algebra}.  
We note that these operations have more efficient implementations,
which will be discussed in Sections~\ref{sec:fft}~and~\ref{sec:computations}.

The circulant algebra analyzed in this paper is closely related to the
\emph{FIR matrix algebra} due to
\citet[Chapter 3]{lambert1996-thesis}.
Lambert proposes an algebra
of circulants; but his circulants are
padded with additional zeros to better
approximation a finite impulse response
operator.  He uses it to study
blind deconvolution problems~\cite{lambert2001-polynomials-svd}.
As he observed, the relationship with matrices implies that many
standard decompositions and techniques from real or complex
valued matrix algebra carry over to the circulant algebra.

The circulant algebra in this
manuscript is a particular instance of a matrix-over-a-ring,
a long studied generalization of linear algebra
\cite{McDonald1984-ring-algebra, brewer1986-linear-systems}.
Prior work focuses on
Roth theorems for the
equation $AX - XB=C$ \cite{Gustafson1979-Roth-theorems}; generalized inverses
\cite{Prasad1994-generalized}; completion and
controllability problems \cite{Gurvits1992-controllability};
matrices over the ring of integers for
computer algebra systems \cite{Hafner1991-matrices-over-rings};
and transfer functions and linear dynamic systems \cite{Sontag1976-ring-systems}.
Finally, see \citet{Gustafson1991-modules-and-matrices} for some interesting relationships between vectors space
theory and module theory.
A recent proposal extends many of
the operations in \citet{kilmer2008-circ-tensor-svd}
to more general algebraic structures \cite{Navasca2010-modules}.

Let us provide some further context on related work.
Multi-way arrays, tensors, and hypermatrices
are a burgeoning area of research;
see \citet{kolda2009-tensor-decompositions}
for a recent  comprehensive survey.  Some
of the major themes are multi-linear
operations, fitting multi-linear
models, and multi-linear generalizations
of eigenvalues \cite{Qi2007-tensor-eigenvalues}.
The formulation in this paper gives rise to stronger
relationships with the literature
on block-circulant matrices,
which have
been studied for quite some time.
See \citet{tee2005-block-circulant} and the references therein 
for further historical and mathematical context on circulant matrices.
In particular, \citet{baker1989-block-circulant-svd} gives a procedure
for the SVD of a block circulant that involves using
the fast Fourier transform to decouple
the problem into independent sub-problems, just
as we shall do throughout this manuscript.
Other work in this vein includes solving
block-circulant systems that arise in
the theory of antenna arrays: \cite{sinott1973-antenna-arrays,%
mazancourt1983-block-circulant,vescovo1997-block-circulant}.

The remainder of this paper is structured as follows.
We first derive a few necessary operations in Section~\ref{sec:ops}, 
including an inner product and norm. 
We then continue this discussion by studying these same operations 
using the Fourier transform of the underlying circulant matrices 
(Section~\ref{sec:fft}).  A few theoretical properties of eigenvalues 
in the circulant algebra are analyzed in Section~\ref{sec:eigen}.  
That section is a necessary prelude to the subsequent discussion of 
how the power method~\cite{vonMises1929-power} and the Arnoldi 
method~\cite{Krylov1931-equations,lanczos1950-iteration,Arnoldi1951-minimized}
 generalize to this algebra, 
which comes in Section~\ref{sec:algs}.
We next explain how we implemented these operations in a \Matlab 
package (Section~\ref{sec:computations}); and we provide a numerical 
example of the algorithms (Section~\ref{sec:example}).  
Section~\ref{sec:conclusion} concludes the manuscript with some 
ideas for future work.

\section{Operations with the power method}
\label{sec:ops}

In the introduction, we provided the basic set of operations in the circulant algebra 
(eqs.~\eqref{eq:circ-op-start}-\eqref{eq:circ-op-end}).  We begin this section by stating the standard power method, and then follow by deriving the operations it requires.

Let $\mA \in \RR^{n \times n}$ and let $\vx \in \RR^{n}$ be an arbitrary starting vector.  Then the power method proceeds by repeated applications of $\mA$; see Figure~\ref{fig:power-standard} for a standard algorithmic description. (Line \ref{alg:power:conv} checks for convergence and is one of several possible stopping criteria.) Under mild and well-known conditions (see \citet{Stewart2001-eigensystems}), this iteration converges to the eigenvector with the largest magnitude eigenvalue.

\begin{figure}
\caption{The power method for a matrix $\mA \in \RR^{n \times n}$.
}
\label{fig:power-standard}
\begin{algorithmic}[1]
\REQUIRE  $\mA, \vx\itn{0}, \tau$
\STATE $\vx\itn{0} \leftarrow \vx\itn{0} \normof{\vx\itn{0}}^{-1}$
\FOR {$k=1, \ldots, $ until convergence}
  \STATE $\vy\itn{k} \leftarrow \mA  \vx\itn{k-1}$
  \STATE $\alpha\itn{k} \leftarrow \normof{\vy\itn{k}}$
  \STATE $\vx\itn{k} \leftarrow \vy\itn{k}  {\alpha\itn{k}}^{-1}$
  \IF {${\,\|{\sign(x_1\itn{k}) \vx\itn{k} - \sign(x_1\itn{k-1}) \vx\itn{k-1}}\|\,} < \tau$} \label{alg:power:conv}
    \RETURN $\vx\itn{k}$
  \ENDIF
\ENDFOR
\end{algorithmic}
\end{figure}

Not all of the operations in Figure~\ref{fig:power-standard} are defined for the
circulant algebra.  In the
first line, we use the norm $\normof{\vx\itn{0}}$ that returns
a scalar in $\RR$.  We also use the scalar inverse $\alpha^{-1}$.
The next operation is the $\sign$ function for a scalar.
Let us define these operations, in order of
their complexity.  In the next section, we will reinterpret
these operations in light of the relationships between
the fast Fourier transform and circulant matrices.
This will help illuminate a few additional properties
of these operations and will let us state an ordering for elements.

\subsection{The scalar inverse}
We begin with the scalar inverse.  Recall that
all operations between scalars behave like circulant
matrices.  Thus, the inverse of $\calpha \in \KK_k$ is
\[ \calpha^{-1} \circeq \tcirc(\calpha)^{-1}. \]
The matrix $\tcirc(\calpha)^{-1}$ is also
circulant~\cite{davis1979-circulant}.

\subsection{Scalar functions and the angle function}
Other scalar functions are also functions of a
matrix (see
\citet{Higham2008-functions-of-matrices}).
Let $f$
be a function, then
\[ f(\calpha) \circeq f(\tcirc(\calpha)) \]
where the right hand side is the same function
applied to a matrix.  (Note that it is not the function applied
to the matrix element-wise.)  


The sign function for a matrix is a special case.
As explained in~\citet{Higham2008-functions-of-matrices},
the sign function applied to a complex value is
the sign of the real-valued part.  We wish to use
a related concept that generalizes the real-valued
sign that we term ``angle.''  Given a complex value
$r e^{\ii \theta}$, then
$\tangle(r e^{\ii \theta}) = e^{\ii \theta}$.
For real or complex numbers $x$, we then have
\[ \tangle(x) \absof{x} = x. \]
Thus, we define
\[ \tangle(\calpha) \circeq \tcirc(\tabs(\calpha))^{-1} \tcirc(\calpha). \]

\subsection{Inner products, norms, and conjugates}
\label{sec:norm}

We now proceed  to define a norm.  The norm of
a vector in $\KK_k^n$ produces a scalar in $\KK_k$:
\[ \normof{\cvx} \circeq (\tcirc(\cvx)^* \tcirc(\cvx))^{1/2} =
    \left( \sum_{i=1}^n \tcirc(\cx_i)^* \tcirc(\cx_i) \right)^{1/2}. \]
For a standard vector $\vx \in \CC^{n}$, the norm
$\normof{\vx} = \sqrt{\vx^* \vx}$.  This definition, in turn,
follows from the standard inner product attached
to the vector space $\CC^{n}$.  As we shall see, our definition
has a similar interpretation.  The inner product implied by our
definition is
\[ \iprod{\cvx}{\cvy} \circeq \tcirc(\cvy)^* \tcirc(\cvx). \]
Additionally, this definition implies that that the \emph{conjugate}
operation in the circulant algebra corresponds to the transpose
of the circulant matrix
\[ \overline{\calpha} \circeq \tcirc(\calpha)^*. \]
With this conjugate, our inner product
satisfies two of the standard properties: conjugate symmetry
$\iprod{\cvx}{\cvy} = \overline{\iprod{\cvy}{\cvx}}$ and
linearity $\iprod{\calpha \circ \cvx}{\cvy} = \calpha \circ \iprod{\cvx}{\cvy}$.
The notion of positive definiteness is more intricate and we
delay that discussion until after introducing
a decoupling technique using the fast Fourier transform
in the following section.   Then, in Section~\ref{sec:triangle},
we use positive definiteness to
demonstrate a Cauchy-Schwarz inequality, which in turn
provides a triangle inequality for the norm.

\section{Operations with the fast Fourier transform}
\label{sec:fft}
In Section~\ref{sec:ops}, we explained the basic operations
of the circulant algebra as operations between matrices.
All of these matrices consisted of circulant blocks.
In this section, we show how to accelerate these operations
by exploiting the relationship between the fast Fourier transform and circulant matrices.

Let $\mC$ be a $k \times k$ circulant
matrix. Then the
eigenvector matrix of $\mC$ is given by the $k \times k$
discrete Fourier transform matrix $\mF$, where
\[ F_{ij} = \frac{1}{\sqrt{k}} \omega^{(i-1)(j-1)} \]
and $\omega = e^{2\pi \ii/k}$.
This matrix is complex symmetric, $\mF^T = \mF$, and
unitary, $\mF^* = \mF^{-1}$.  Thus, $\mC = \mF \mD \mF^*$,
$\mD = \diag(\lambda_1, \ldots, \lambda_k)$.
Recall that multiplying a vector by $\mF$ or $\mF^*$ can be
accomplished via the fast Fourier transform in
$\mathcal{O}(k \log k)$ time instead of $\mathcal{O}(k^2)$
for the typical matrix-vector product algorithm.  Also,
computing the matrix $\mD$ can be done in time $O(k \log k)$
as well.

To express our operations, we define a new transformation,
the ``Circulant Fourier Transform'' or $\cft$.  Formally,
$\fft : \calpha \in \KK_k \mapsto \CC^{k \times k}$ and its
inverse $\ifft : \CC^{k \times k} \mapsto \KK_k$ as follows:
\[ \fft(\calpha) \eqdef \sbmat{ \hat{\alpha}_1 \\ & \ddots \\ & & \hat{\alpha}_k }
      = \mF^* \tcirc(\calpha) \mF,
   \qquad
   \ifft \left( \sbmat{ \hat{\alpha}_1 \\ & \ddots \\ & & \hat{\alpha}_k } \right) \eqdef
   \calpha \circeq \mF \fft(\calpha) \mF^*, \]
where $\hat{\alpha}_j$ are the eigenvalues of $\tcirc(\calpha)$ as produced
in the Fourier transform order.  These transformations satisfy
$\ifft(\fft(\calpha)) = \calpha$ and provide a convenient way of moving
between operations in $\KK_k$ to the more familiar environment
of diagonal matrices in $\CC^{k \times k}$.

The $\cft$ and $\icft$ transformations are extended to matrices and
vectors over $\KK_k$
differently than the $\tcirc$ operation we saw before.   Observe
that $\fft$ applied ``element-wise'' to the $\tcirc(\cmA)$ matrix produces a matrix
of diagonal blocks.  In our extension of the $\fft$ routine,
we perform an additional permutation to expose
block-diagonal structure from these diagonal blocks.  This permutation
 $\mP_m \mA \mP_n^T$
transforms an $mk \times nk$ matrix of $k \times k$
diagonal blocks into a block diagonal $mk \times nk$ with $m \times n$ size
blocks.
It is also known as a
\emph{stride permutation matrix}~\cite{Granata1992-tensor}.
The construction of $\mP_m$, expressed in \textsc{Matlab} code is
\begin{quote}
\begin{verbatim}
p = reshape(1:m*k,k,m)';
Pm = sparse(1:m*k,p(:),1,m*k,m*k);
\end{verbatim}
\end{quote}
The construction for $\mP_n$ is identical.
In Figure~\ref{fig:circ-fft}, we illustrate the overall
transformation process that extends $\fft$ to matrices
and vectors.
\begin{figure}
\centering
\includegraphics[width=\linewidth]{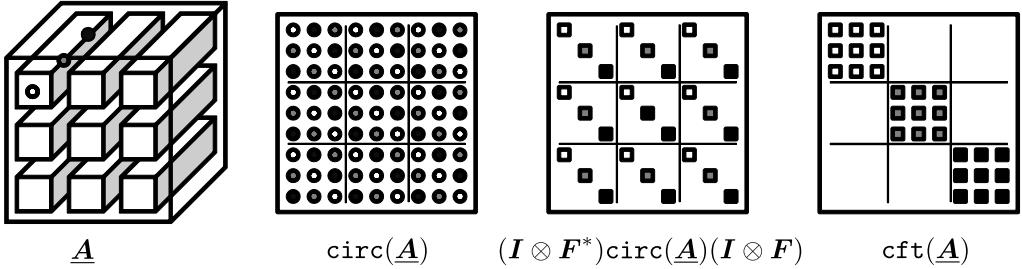}

\caption{The sequence of transformations in our $\fft$ operation.
Given a circulant $\mA$, we convert it into a matrix by $\tcirc(\mA)$.
The color of the circles in the figure is emphasizing the circulant
structure, and not equality between blocks.  In the third
figure, we diagonalize each
circulant using the Fourier transform.  The pattern of
eigenvalues is represented by squares.  Here, we are coloring
the squares to show the reordering induced by the permutation
at the final step of the $\fft$ operation.}
\label{fig:circ-fft}
\end{figure}

Algebraically, the $\fft$ operation for a matrix $\cmA \in \KK_k^{m \times n}$ is
\[ \fft(\cmA) = \mP_m (\eye_m \kron \mF^*) \tcirc(\cmA) (\eye_n \kron \mF) \mP_n^T, \]
where $\mP_m$ and $\mP_n$ are the 
permutation matrices introduced
above.
We can equivalently write this directly in terms of the
eigenvalues of each of the circulant blocks of $\tcirc(\cmA)$:
\[ \fft(\cmA) \eqdef \sbmat{ \mAhat_1 \\ & \ddots \\ & & \mAhat_k },
    \qquad
    \mAhat_j =
      \sbmat{ \lambda_j^{1,1} & \ldots & \lambda_j^{1,n} \\
              \vdots & \ddots & \vdots \\
              \lambda_j^{m,1} & \ldots & \lambda_j^{m,n} }, \]
where $\lambda_1^{r,s}, \ldots, \lambda_k^{r,s}$ are the diagonal
elements of $\fft(\cA_{r,s})$.
The inverse operation $\ifft$, takes a block diagonal matrix and returns
the matrix in $\KK_k^{m \times n}$:
\[ \ifft(\mA) \circeq (\eye_m \kron \mF) \mP_m^T \mA \mP_n (\eye_n \kron \mF^*). \]

Let us close this discussion by providing a concrete example of this operation.
\begin{example} \label{ex:cft}
Let $\cmA = \sbmat{ \csbmat{2 & 3 & 1} & \csbmat{8 & \minus2 & 0} \\
                 \csbmat{\minus2 & 0 & 2} & \csbmat{3 & 1 & 1} }$ .
The result of the $\tcirc$ and $\fft$ operations, as illustrated in Figure~\ref{fig:circ-fft}, are:
\[
\begin{aligned}
\tcirc(\cmA)
& = \left[  \begin{array}{>{\scriptstyle}c>{\scriptstyle}c>{\scriptstyle}c|>{\scriptstyle}c>{\scriptstyle}c>{\scriptstyle}c}
                     2 & 1 & 3 & 8 & 0 & \minus2 \\
                     3 & 2 & 1 & \minus2 & 8 & 0 \\
                     1 & 3 & 2 & 0 & \minus2 & 8 \\ \midrule
                     \minus2 & 2 & 0 & 3 & 1 & 1 \\
                     0 & \minus2 & 2 & 1 & 3 & 1 \\
                     2 & 0 & \minus2 & 1 & 1 & 3
               \end{array} \right],
\\
(\eye \kron \mF^*)\tcirc(\cmA)(\eye \kron \mF)
& = \left[ \begin{array}{>{\scriptstyle}c>{\scriptstyle}c>{\scriptstyle}c|>{\scriptstyle}c>{\scriptstyle}c>{\scriptstyle}c}  6 & & & 6 & & \\
                       & \minus\sqrt{3}\ii      & &   & \minus9 + \sqrt{3}\ii & \\
                       &   &  \sqrt{3}\ii & & & \minus9 - \sqrt{3}\ii \\ \midrule
                       0 &   &  & 5 & & \\
                         & \minus3 + \sqrt{3}\ii  &  &                & 2 & \\
                         &   & \minus3 - \sqrt{3}\ii &                 & &  2 \\
              \end{array} \right],
\\
\fft(\cmA)
& = \left[ \scriptstyle  \begin{array}{>{\scriptstyle}c>{\scriptstyle}c|>{\scriptstyle}c>{\scriptstyle}c|>{\scriptstyle}c>{\scriptstyle}c} 6 & 6 & & & &\\
                     0 & 5 & & & & \\  \midrule
                       &   &  \minus\sqrt{3}\ii & \minus9 + \sqrt{3}\ii \\
                       &   &  \minus3 + \sqrt{3}\ii & 2 \\  \midrule
                       &   &                   &              & \sqrt{3}\ii & \minus9 - \sqrt{3}\ii \\
                       &   &                   &              & \minus3-\sqrt{3}\ii & 2
              \end{array} \right].
\end{aligned}
\]
\end{example}

\subsection{Operations}
We now briefly illustrate how the $\fft$ accelerates
and simplifies many operations.  Let $\calpha, \cbeta \in \KK_k$.
Note that
\[
\begin{aligned}
 \calpha + \cbeta & = \ifft( \fft(\calpha) + \fft(\cbeta) ), \text{ and } \\
 \calpha \circ \cbeta & = \ifft( \fft(\calpha) \fft(\cbeta) ). \\
\end{aligned}
\]
In the Fourier space -- the output of the $\fft$ operation --
these operations are both $O(k)$ time because they occur between diagonal
matrices.
Due to the linearity of the $\fft$ operation, arbitrary sequences of
operations in the Fourier space
transform back seamlessly, for instance
\[ \underbrace{(\calpha + \cbeta) \circ (\calpha + \cbeta)  \circ \ldots \circ (\calpha + \cbeta) }_{j \text{ times}} = \ifft( (\fft(\calpha) + \fft(\cbeta))^j ). \]
But even more importantly, these simplifications generalize to matrix-based
operations too.  For example,
\[ \cmA \circ \cvx = \ifft( \fft(\cmA) \fft(\cvx) ). \]
In fact, in the Fourier space, this system is a series of
independent matrix vector products:
\[ \fft(\cmA) \fft(\cvx) =
  \sbmat{\mAhat_1 \\ & \ddots \\ & & \mAhat_k}
  \sbmat{\vxhat_1 \\ & \ddots \\ & & \vxhat_k}
  = \sbmat{\mAhat_1 \vxhat_1 \\ & \ddots \\ & & \mAhat_k \vxhat_k} .
\]
Here, we have again used $\mAhat_j$ and $\vxhat_j$ to denote the
blocks of Fourier coefficients, or equivalently, circulant eigenvalues.
\emph{The rest of the paper frequently uses this convention and
shorthand where it is clear from context.}
This formulation takes
\[ \underbrace{O(m n k \log k + n k \log k)}_{\fft \text{ and } \ifft} + \underbrace{O(k m n)}_{\text{matvecs}} \]
operations instead of $O(mnk^2)$ using the $\tcirc$ formulation
in the previous section.

More operations are simplified in the Fourier space too.
Let $\fft(\calpha) = \diag\sbmat{\hat{\alpha}_1, & \ldots, & \hat{\alpha}_k}$.
Because the $\hat{\alpha}_j$ values are the eigenvalues of $\tcirc(\alpha)$, the following functions simplify:
\[
\begin{aligned}
\tabs(\calpha) & = \ifft( \diag\sbmat{|\hat{\alpha}_1|, & \ldots,  & |\hat{\alpha}_k|}), \\
\conj{\calpha} & = \ifft( \diag\sbmat{\conj{\hat{\alpha}_1}, & \ldots,  & \conj{\hat{\alpha}_k}}) = \ifft(\fft(\calpha)^*), \text{ and } \\
\tangle(\calpha) & = \ifft( \diag\sbmat{\hat{\alpha}_1/|\hat{\alpha}_1|, & \ldots, & \hat{\alpha}_k/|\hat{\alpha}_k| } ).
\end{aligned}
\]

\paragraph{Complex values in the CFT}

A small concern with the $\ifft$ operation is that it may produce
complex-valued elements in $\KK_k$.
It suffices to note that when the output of a sequence of
circulant operations produces a real-valued circulant,
then the output of $\ifft$ is also real-valued.  In other
words, there is no problem working in Fourier space instead
of the real-valued circulant space.  This
fact can be formally verified by first formally stating the conditions
under which $\ifft$ produces real-valued circulants
($\icft(\mD)$ is real if and only if $\mF^2 \mD \mF^2 = \mD^*$,
see \citet{davis1979-circulant}),
and then
checking that the operations in the Fourier space do not
alter this condition.

\subsection{Properties}
Representations in Fourier space are convenient for illustrating
some properties of these operations. 

\begin{proposition}
The matrix $\tcirc(\tangle(\calpha))$ is orthogonal.
\end{proposition}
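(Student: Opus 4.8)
The plan is to push the entire statement into Fourier space, where $\tcirc(\tangle(\calpha))$ becomes a diagonal matrix whose entries all have unit modulus, and then transport orthogonality back through the unitary matrix $\mF$. This keeps the argument in line with the ``Properties'' discussion, which already works in the Fourier representation.

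First I would recall the three representations already established. From Section~\ref{sec:fft} we have $\tcirc(\calpha) = \mF \fft(\calpha) \mF^*$ with $\fft(\calpha) = \diag(\hat{\alpha}_1, \ldots, \hat{\alpha}_k)$; the $\tabs$ identity gives $\tcirc(\tabs(\calpha)) = \mF \diag(|\hat{\alpha}_1|, \ldots, |\hat{\alpha}_k|) \mF^*$; and the simplified form derived in the Operations subsection is $\tangle(\calpha) = \ifft\bigl(\diag(\hat{\alpha}_1/|\hat{\alpha}_1|, \ldots, \hat{\alpha}_k/|\hat{\alpha}_k|)\bigr)$. Applying $\tcirc$ to this last identity yields
\[ \tcirc(\tangle(\calpha)) = \mF \, \diag\!\left( \frac{\hat{\alpha}_1}{|\hat{\alpha}_1|}, \ldots, \frac{\hat{\alpha}_k}{|\hat{\alpha}_k|} \right) \mF^*. \]

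Writing $\mD = \diag(\hat{\alpha}_j/|\hat{\alpha}_j|)$, each diagonal entry has modulus one, so $\mD^* \mD = \eye$. Since $\mF^* = \mF^{-1}$, I would then compute
\[ \tcirc(\tangle(\calpha))^* \, \tcirc(\tangle(\calpha)) = \mF \mD^* \mF^* \mF \mD \mF^* = \mF \mD^* \mD \mF^* = \mF \mF^* = \eye, \]
which is exactly the orthogonality claim. (An equivalent route that avoids Fourier space uses that circulants commute and that $\tcirc(\tabs(\calpha))$ is Hermitian with $\tcirc(\tabs(\calpha))^2 = \tcirc(\calpha)^* \tcirc(\calpha)$; then $\tcirc(\tabs(\calpha))^{-1} \tcirc(\calpha)$ coincides with its own inverse-adjoint, giving the same conclusion.)

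The one point requiring care --- and the main obstacle --- is the possibility that some eigenvalue $\hat{\alpha}_j$ vanishes, in which case $\hat{\alpha}_j/|\hat{\alpha}_j|$ is undefined and the inverse $\tcirc(\tabs(\calpha))^{-1}$ appearing in the definition of $\tangle(\calpha)$ does not exist. I would dispatch this by assuming $\calpha$ has no zero Fourier coefficients (equivalently, that $\tcirc(\calpha)$ is nonsingular), which is precisely the condition under which $\tangle(\calpha)$ is well defined; under that hypothesis the computation above is complete, and the remaining verifications are routine once the unit-modulus observation is in hand.
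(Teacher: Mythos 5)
Your proof is correct and follows essentially the same route as the paper's: both pass to Fourier space, observe that the diagonal entries $\hat{\alpha}_j/|\hat{\alpha}_j|$ have unit modulus, and conclude that $\tcirc(\tangle(\calpha))^* \tcirc(\tangle(\calpha)) = \eye$ (the paper phrases this as $\conj{\tangle(\calpha)} \circ \tangle(\calpha) = \cone$, which is the same computation). Your explicit caveat about vanishing Fourier coefficients $\hat{\alpha}_j = 0$ is a point the paper leaves implicit, and it is a reasonable addition since $\tangle(\calpha)$ is only defined when $\tcirc(\tabs(\calpha))$ is invertible.
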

\begin{proof} We have
\[ \begin{aligned}
&   \tcirc(\tangle(\calpha))^* \tcirc(\tangle(\calpha))
 \circeq  \\
 & \qquad   \conj{\tangle(\calpha)} \circ \tangle(\calpha)
  =
    \ifft\left( \sbmat{
       \conj{\hat{\alpha}}_1 \hat{\alpha}_1/|\hat{\alpha}_1|^2 \\
       & \ddots
       \\ & & \conj{\hat{\alpha}}_k \hat{\alpha}_k/|\hat{\alpha}_k|^2 }
      \right)  = \cone.
\end{aligned} \]
\end{proof}
Additionally, the Fourier space is an easy place to understand
spanning sets and bases in $\KK_k^m$, as the following proposition shows.
\begin{proposition} \label{thm:basis}
Let $\cmX \in \KK_k^{m \times n}$. Then
$\cmX$ spans $\KK_k^m$ \emph{if and only if} $\tcirc(\cmX)$
and $\fft(\cmX)$ have rank $km$.  Also $\cmX$
is a basis \emph{if and only if} $\tcirc(\cmX)$
and $\fft(\cmX)$ are invertible.
\end{proposition}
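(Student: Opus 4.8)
The plan is to reduce this module-theoretic statement to ordinary linear algebra by passing through the $\fft$ representation, where the circulant structure decouples into $k$ independent blocks. First I would observe that the two matrix conditions appearing in each equivalence are really a single condition. Because
\[ \fft(\cmX) = \mP_m (\eye_m \kron \mF^*) \tcirc(\cmX) (\eye_n \kron \mF) \mP_n^T, \]
and each flanking factor is a product of a unitary Fourier matrix with a permutation matrix, hence invertible, the matrices $\tcirc(\cmX)$ and $\fft(\cmX)$ are equivalent: they always have the same rank, and one is invertible exactly when the other is. So it suffices to argue with $\fft(\cmX)$, whose block-diagonal form (diagonal blocks $\mXhat_1, \ldots, \mXhat_k$, each $m \times n$) makes the computation transparent.

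Next I would restate the two notions in terms of the $\KK_k$-linear map $\phi : \KK_k^n \to \KK_k^m$, $\cvc \mapsto \cmX \circ \cvc$: by definition $\cmX$ spans $\KK_k^m$ iff $\phi$ is surjective, and $\cmX$ is a basis iff $\phi$ is bijective. Applying $\fft$ and using $\fft(\cmX \circ \cvc) = \fft(\cmX)\,\fft(\cvc)$ together with the block-diagonal form, the invertible change of coordinates carries $\phi$ into the direct sum $\bigoplus_{j=1}^k \mXhat_j$ of ordinary complex linear maps $\CC^n \to \CC^m$. Since a direct sum of linear maps is surjective (respectively bijective) iff every summand is, $\phi$ is surjective iff each $\mXhat_j$ has full row rank $m$, and bijective iff each $\mXhat_j$ is invertible. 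As $\fft(\cmX)$ is block diagonal, $\rank \fft(\cmX) = \sum_{j=1}^k \rank \mXhat_j \le km$ with equality precisely when every block has rank $m$, which gives the spanning equivalence; likewise $\fft(\cmX)$ is invertible iff every block is invertible (forcing $n = m$), which gives the basis equivalence. Combined with the first paragraph, both statements transfer back to $\tcirc(\cmX)$.

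The step requiring the most care is the decoupling when the entries of $\cmX$ are real circulants, since then $\fft$ produces complex blocks even though the module is defined over a real ring. Here I would note that the Fourier eigenvalues occur in conjugate pairs, so the blocks satisfy $\mXhat_j = \conj{\mXhat_{j'}}$ for the paired index $j'$; conjugate matrices have equal rank and are simultaneously invertible, so the per-block conditions are automatically consistent and the single clean requirement $\rank \fft(\cmX) = km$ (respectively invertibility of $\fft(\cmX)$) faithfully captures spanning (respectively being a basis) over the real circulant module. With complex entries this subtlety disappears and the decoupling is immediate.
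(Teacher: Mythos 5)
Your proof is correct and follows essentially the same route as the paper's: reduce to $\fft(\cmX)$ using the invertible flanking factors, then exploit the block-diagonal structure to decouple spanning and basis into per-block full-row-rank and invertibility conditions on the $\mXhat_j$. If anything you are slightly more careful than the paper, which loosely calls the transformation a ``similarity'' (it is an equivalence transformation when $m \neq n$) and does not discuss the real-circulant conjugate-symmetry point you raise in your final paragraph.
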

\begin{proof}
First note that $\rank(\fft(\cmX)) = \rank(\tcirc(\cmX))$
because $\fft$ is a similarity transformation applied to $\tcirc$.
It suffices to show this result for $\fft(\cmX)$, then.  Now consider
$\cvy = \cmX \circ \cva$:
\[ \begin{aligned}
\fft(\cvy) & = \fft(\cmX) \fft(\cva) ; \\
\sbmat{\vyhat_1 \\ & \ddots \\ & & \vyhat_k}
& =
\sbmat{\mXhat_1 \\ & \ddots \\ & & \mXhat_k}
  \sbmat{\vahat_1 \\ & \ddots \\ & & \vahat_k}.
\end{aligned} \]
Thus, if there is a $\cvy$
that is feasible, then all $\mXhat_j \in \CC^{m \times n}$ must be rank $m$.
Conversely, if $\fft(\cmX)$ has rank $km$
then each $\mXhat_j$ must have rank $m$, and any $\cvy$
is feasible.  The result about the basis follows
from an analogous argument.
\end{proof}

\subsection{Inner products, norms, and ordering}
\label{sec:triangle}

We now return to our inner product and norm to elaborate on the
positive-definiteness and the triangle inequality.
In terms of the Fourier transform,
\[ \iprod{\cvx}{\cvy} = \ifft(\fft(\cvy)^* \fft(\cvx)). \]
If we write this in terms of the blocks of Fourier coefficients then
\[ \fft(\cvx)^* \fft(\cvy) = \sbmat{ \vyhat_1^* \vxhat^{}_1 \\ & \ddots \\ & & \vyhat_k^* \vxhat^{}_k }. \]
For $\cvy = \cvx$, each diagonal term has the form $\vxhat_j^* \vxhat^{}_j \ge 0$.
Consequently, we do consider this a positive semi-definite inner product because
the output $\tcirc(\iprod{\cvx}{\cvy})$ is a matrix with non-negative
eigenvalues.  This idea motivates the following definition of element ordering.
\begin{definition}[Ordering] \label{def:ordering}
 Let $\calpha, \cbeta \in \KK_k$.  We write
 \[
\begin{aligned}
 \calpha \le \cbeta
   \qquad \text{ when } & \qquad
   \diag( \cft(\calpha)) \le \diag(\fft(\cbeta)) \quad \text{ element-wise, and} \\
\calpha < \cbeta
   \qquad \text{ when } & \qquad
   \diag( \cft(\calpha)) < \diag(\fft(\cbeta)) \quad \text{ element-wise}.
\end{aligned}	\]
\end{definition}

We now show that our inner product satisfies the Cauchy-Schwarz
inequality:
\[ \tabs{\iprod{\cvx}{\cvy}} \le \normof{\cvx} \circ \normof{\cvy}. \]
In Fourier space, this fact holds because
$|\vyhat_j^* \vxhat_j| \le \normof{\vxhat_j} \normof{\vyhat_j}$ follows from the
standard Cauchy-Schwarz inequality.  Using this inequality,
we find that our norm satisfies the triangle inequality:
\[ \normof{\cvx + \cvy}^2 = \iprod{\cvx + \cvy}{\cvx + \cvy}
   \le \iprod{\cvx}{\cvx} + \cel{2} \circ \normof{\cvx} \circ \normof{\cvy} + \iprod{\cvy}{\cvy}
  = (\normof{\cvx} + \normof{\cvy})^2. \]
In this expression, the constant $\cel{2}$ is twice the multiplicative identify, that is $\cel{2} = \csbmat{ 2 & 0 & \ldots & 0 }$.

\section{Eigenvalues and Eigenvectors}
\label{sec:eigen}

With the results of the previous few sections, we can
now state and analyze an eigenvalue problem in
circulant algebra.  \citet{braman201x-tensor-eigenvalues} investigated
these already and proposed a decomposition approach to compute them.
We offer an extended analysis that addresses a few additional aspects.
Specifically, we focus on a {\em canonical} set of eigenpairs.

Recall that eigenvalues of matrices are the roots of the
characteristic polynomial $\det(\mA - \lambda \eye) = 0.$
Now let $\cmA \in \KK_k^{n \times n}$
and $\clambda \in \KK_k$.
The eigenvalue problem does not change:
\[ \det(\cmA - \cel{\lambda} \circ \cmI) = \cel{0}. \]
(As an aside, note that the standard properties of the determinant hold for any
matrix over a commutative ring with identity; in particular,
the Cayley-Hamilton theorem holds in this algebra.)
The existence of an eigenvalue implies
the existence of a corresponding eigenvector $\cvx \in \KK_k^{n}$.
Thus, an eigenvalue and eigenvector pair in this algebra is
\[ \cmA \circ \cvx = \clambda \circ \cvx. \]

Just like the matrix case, these eigenvectors can
be rescaled by any constant $\calpha \in \KK_k$:
$ \cmA \circ \calpha \circ \cvx = \clambda \circ \calpha \circ \cvx. $
In terms of normalization, note that
$\normof{\cbeta \circ \cvx} = \normof{\cvx}$
if $\tcirc(\cbeta)$ is an orthogonal circulant.  This follows
most easily by noting that
\[ \normof{\cbeta \circ \cvx} \circeq
   \left( \sum_{i=1}^n \tcirc(\cbeta)^* \tcirc(\vx_i)^* \tcirc(\vx_i) \tcirc(\cbeta) \right)^{1/2}
   \circeq \normof{\cvx}, \]
because circulant matrices commute and $\tcirc(\cbeta)$ is orthogonal by construction.
For this reason, we consider orthogonal circulant matrices the
analogues of \emph{angles} or \emph{signs}, and normalized eigenvectors in the circulant algebra
can be rescaled by them. (Recall that we showed
that $\tangle(\calpha)$ is an orthogonal circulant in
Section~\ref{sec:fft}.)


The Fourier transform offers a
convenient decoupling procedure to compute eigenvalues and eigenvectors,
as observed by \citet{braman201x-tensor-eigenvalues}.
 Let $\cmA \in \KK_k^{n \times n}$ and let
$\cvx \in \KK_k^n$ and $\clambda$ be an eigenvalue
and eigenvector pair: $\cmA \circ \cvx = \cvx \circ \clambda$
and $\det(\cmA - \clambda \circ \cmI) = 0$.
Then it is straightforward to show that the Fourier transforms $\fft(\cmA)$, $\fft(\cvx)$,
and $\fft(\clambda)$ decouple as follows:
\[\begin{aligned}
 \fft(\cmA \circ \cvx) & = \fft(\cvx \circ \clambda) ; \\
 \fft(\cmA) \fft(\cvx) & = \fft(\cvx) \fft(\clambda) ; \\
  \sbmat{\mAhat_1 \\ & \ddots \\ & & \mAhat_k}
  \sbmat{\vxhat_1 \\ & \ddots \\ & & \vxhat_k}
  & = \sbmat{\vxhat_1 \\ & \ddots \\ & & \vxhat_k}
    \sbmat{ \hat{\lambda}_1 \\ & \ddots \\ & & \hat{\lambda}_k } ;
  \\
 \sbmat{\mAhat_1 \vxhat_1 \\ & \ddots \\ & & \mAhat_k \vxhat_k}
 & = \sbmat{ \hat{\lambda}_1 \vxhat_1 \\ & \ddots \\ & & \hat{\lambda}_k \vxhat_k },
  \end{aligned}
\]
where $\hat{\lambda}_j \in \lambda(\mAhat_j)$ and $\vxhat_j \not= 0$.
The last equation follows because
\[\fft(\det(\cmA - \clambda \circ \cI)) =
   \diag\sbmat{\det(\mAhat_1 - \hat{\lambda}_1 \eye), & \ldots, & \det(\mAhat_k - \hat{\lambda}_k \eye)}
   = 0.
\]

The decoupling procedure we just described shows that \emph{any}
eigenvalue or eigenvector of $\cmA$ must
decompose into individual eigenvalues or eigenvectors of the
$\fft$-transformed problem.
This illustrates a fundamental difference from the
standard matrix algebra. For standard matrices, requiring
$\det(\mA - \lambda \eye) = 0$ and finding a nonzero solution
$\vx$ for $\mA \vx=\lambda \vx$ are equivalent. In contrast,
the determinant and the eigenvector
equations are not  equivalent in the circulant algebra:
$\cmA \circ \cvx = \cvx \circ \clambda$ actually has an
infinite number of solutions $\clambda$.  For instance,
set $\vxhat_1, \hat{\lambda}_1$
to be an eigenpair of $\mAhat_1$ and $\vxhat_j=0$ for $j>1$, then any
value for $\hat{\lambda}_j$ solves $\cmA \circ \cvx = \cvx \circ \clambda$.
However, only a few of these solutions also satisfy
$\det(\cmA - \clambda \circ \cmI)=\cel{0}$.

Eigenvalues of matrices in $\KK_k^{n \times n}$ have some
interesting properties. Most notably, a matrix may have more than $n$
eigenvalues. As a special case, the diagonal elements of a
matrix are not necessarily the only eigenvalues.  We demonstrate
these properties with an example.
\begin{example} \label{ex:diag-evals}
 For the diagonal matrix
  \begin{equation*}
    \bmat{ \csbmat{2 & 3 & 1} & \csbmat{0 & 0 & 0} \\
           \csbmat{0 & 0 & 0} & \csbmat{3 & 1 & 1} }
  \end{equation*}
  we have
  \[ \mAhat_1 = \bmat{6 & 0\\0 & 5}, \quad
   \mAhat_2 = \bmat{\minus\ii \sqrt{3} & 0 \\ 0 & 2}, \quad
   \mAhat_3 = \bmat{\ii \sqrt{3} & 0 \\ 0 & 2}. \]

%
%
%

Thus,
\[
\begin{aligned}
\clambda_1 & = \ifft(\diag\sbmat{6 & 2 & 2}) = \smallmath{(1/3)} \csbmat{10 & 4 & 4}&
 \qquad
\clambda_2 & = \ifft(\diag\sbmat{5 & \minus\ii \sqrt{3} & \ii\sqrt{3}}) = \smallmath{(1/3)}\csbmat{5 & 2 & 2} \\
\clambda_3 & = \ifft(\diag\sbmat{6 & \minus\ii \sqrt{3} & \ii\sqrt{3}}) = \csbmat{2 & 3 & 1} &
\qquad
\clambda_4 & = \ifft(\diag\sbmat{5 & 2 & 2}) = \smallmath{(1/3)}\csbmat{3 & 1 & 1}. \\
\end{aligned}
 \]
 The corresponding eigenvectors are
  \[
  \begin{array}{l@{\qquad}l}
       \cvx_1 = \bmat{\csbmat{ 1/3 & 1/3 & 1/3} \\ \csbmat{ 2/3 & \minus1/3 &\minus1/3} } ;
     &
      \cvx_2 = \bmat{\csbmat{ 2/3 & \minus 1/3 & \minus 1/3 } \\ \csbmat{1/3 & 1/3 & 1/3} } ;
     \\[3ex]
       \cvx_3 = \bmat{\csbmat{1 & 0 & 0} \\ \csbmat{0 & 0 & 0}} ;
     &
      \cvx_4 = \bmat{\csbmat{0 & 0 & 0} \\ \csbmat{1 & 0 & 0}}.
  \end{array}
  \]

There are still more eigenvalues, however.  The four eigenvalues above all
correspond to elements in $\KK_k$ with real-valued entries.
We can combine the eigenvalues of the $\mAhat_j$'s to produce
complex-valued elements in $\KK_k$ that are also eigenvalues.
These are
\[
\begin{aligned}
\clambda_5 & = \ifft(\diag\sbmat{6 & \minus\ii\sqrt{3} & 2}) &
 \qquad
\clambda_6 & = \ifft(\diag\sbmat{6 & 2 & \ii\sqrt{3}}) \\
\clambda_7 & = \ifft(\diag\sbmat{5 & \minus\ii \sqrt{3} & 2}) &
\qquad
\clambda_8 & = \ifft(\diag\sbmat{5 & 2 & \ii \sqrt{3}}). \\
\end{aligned}
 \]

For completeness and further clarity, let us extend this example a bit by presenting also the eigenvalues of the non-diagonal matrix from Example~\ref{ex:cft}.
Let $\cmA = \sbmat{ \csbmat{2 & 3 & 1} & \csbmat{8 & \minus2 & 0} \\
                 \csbmat{\minus2 & 0 & 2} & \csbmat{3 & 1 & 1} }$ .
The $\cft$ produces:
\[ \mAhat_1 = \bmat{6 & 6\\0 & 5}, \quad
   \mAhat_2 = \bmat{\minus \sqrt{3} & \minus9 + \ii\sqrt{3} \\ \minus3 + \ii\sqrt{3} & 2}, \quad
   \mAhat_3 = \bmat{\ii \sqrt{3} & \minus9 - \ii\sqrt{3} \\ \minus3 + \ii\sqrt{3} & 2}. \]
The numerical eigenvalues of $\mAhat_1$ are $\{6, 5\}$; of $\mAhat_2$ are
$\{ \minus0.0899 + 6.4282\ii, 2.0899 - 4.6962\ii \}$; and of
$\mAhat_3$ are $\{ \minus0.0899 - 6.4282\ii, 2.0899 + 4.6962\ii \}$.
The real-valued eigenvalues of $\cmA$ are
\[
\begin{aligned}
\clambda_1 & = \csbmat{1.9401 & \minus1.6814 & 5.7413} &
\clambda_2 & = \csbmat{3.0599 & 3.6814 & \minus1.7413}  \\
\clambda_3 & = \csbmat{3.3933 & 4.0147 & \minus1.4080} &
\clambda_4 & = \csbmat{1.6067 & \minus2.0147 & 5.4080} .
\end{aligned}
\]
The complex-valued eigenvalues of $\cmA$ are 
\[
\begin{aligned}
\clambda_5 & = \csbmat{4.6966 - 1.5654\ii, & \minus0.7040 + 1.9114\ii, & 2.0073 - 0.3461\ii} \\
\clambda_6 & = \csbmat{3.6367 + 2.1427\ii, & 3.0373 + 0.3980\ii, & \minus0.6740 - 2.5407\ii}  \\
\clambda_7 & = \csbmat{4.3633 - 1.5654\ii, & \minus1.0373 + 1.9114\ii, & 1.6740 - 0.3461\ii } \\
\clambda_8 & = \csbmat{3.3034 + 2.1427\ii, & 2.7040 + 0.3980\ii, & \minus1.0073 - 2.5407\ii} .
\end{aligned}
\]
\end{example}

We now count the number of unique eigenvalues and eigenvectors,
using the decoupling procedure in the Fourier space.
To simplify the discussion, let us only consider the case
where each $\mAhat_j$ has simple eigenvalues.
Consider an $\cmA \in \KK_k^{n \times n}$ with this property,
and let $m_j$ be the number of unique eigenvalues
and eigenvectors of $\mAhat_j$.
Then the number
of unique eigenvalues of $\cmA$ is given by the number of
unique solutions to $\det(\cmA - \clambda \circ \cmI)=0$
 which is $\prod_{j=1}^k m_j$.  The number of unique
 eigenvectors (up to normalization) is given by the number
 of unique solutions to $\cmA \circ \cvx = \cvx \circ \clambda$,
 which is also $\prod_{j=1}^k m_j$.

This result shows
there are at most $n^k$ eigenvalues if $\clambda \in \KK_k$
is allowed to be complex-valued, even when $\cmA \in \KK_k$
is real-valued.  If $\cmA \in \KK_k$ is real-valued,
then there are at most $n^{\ceilof{(k+1)/2}}$ ``real''
eigenvalues.  For this result,
note that $\ifft(\diag\sbmat{\alpha_1 & \ldots & \alpha_k})$ is real-valued
if and only if $\diag\sbmat{\alpha_1 & \ldots & \alpha_k}^* =
\fftm^2 \diag\sbmat{\alpha_1 & \ldots & \alpha_k}\fftm^2$~\cite{davis1979-circulant},
where $\fftm$ is the Fourier transform matrix.   This
implies $\alpha_1$ is real-valued, and
$\alpha_j = \conj{\alpha_{k-j+1}}$.  Applying this restriction
reduces the feasible combinations
of eigenvalues to $n^{\ceilof{(k+1)/2}}$.

Given that there are so many eigenvalues and vectors, are all
of them necessary to describe $\cmA$?  We now show this is
not the case by making a few definitions to clarify
the discussion.
\begin{definition}
 Let $\cmA \in \KK_k^{n \times n}$.
  A  \emph{canonical} set
 of eigenvalues and eigenvectors is a set of minimum
 size, ordered such that
 $\tabs(\clambda_1) \ge \tabs(\clambda_2) \ge \ldots \ge \tabs(\clambda_k)$,
  which contains the information to
  reproduce \emph{any} eigenvalue or eigenvector of $\cmA$
\end{definition}
In the diagonal matrix from Example~\ref{ex:diag-evals},
%
the sets $\{ (\clambda_1, \cvx_1), (\clambda_2, \cvx_2) \},$
 $\{ (\clambda_3, \cvx_3), (\clambda_4, \cvx_4) \},$ and
$\{ (\clambda_1, \cvx_1), (\clambda_3, \cvx_3), (\clambda_4, \cvx_4) \}$
contain all the information to reproduce any eigenpair, whereas
the set $\{ (\clambda_1, \cvx_1), (\clambda_3, \cvx_3) \}$
does not (it does not contain the eigenvalue $5$ of $\mAhat_1$).
In this case,
the only canonical set is $\{ (\clambda_1, \cvx_1), (\clambda_2, \cvx_2) \}$.
This occurs because, by a simple counting argument, a canonical
set must have at least two eigenvalues, thus the set is of minimum size.
The choice of $\clambda_1$ and $\clambda_2$ is given by the ordering
condition.  Among all the size $2$ sets with all the information,
this is the only one with the property that $\tabs(\clambda_1) \ge \tabs(\clambda_2)$.

\begin{theorem}[Unique Canonical Decomposition] \label{thm:canonical}
 Let $\cmA \in \KK_k^{n \times n}$ where each $\mAhat_j$ in
 the $\fft(\cmA)$ matrix has distinct eigenvalues with distinct
 magnitudes.
 Then $\cmA$ has a unique canonical set of $n$ eigenvalues
 and eigenvectors.
 This canonical set corresponds to a basis of $n$ eigenvectors, yielding
 an eigendecomposition \[ \cmA = \cmX \circ \cmLambda \circ \cmX^{-1} . \]
\end{theorem}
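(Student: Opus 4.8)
The plan is to transport the entire problem into Fourier space via $\fft$, where $\cmA$ decouples into the $k$ independent blocks $\mAhat_1,\dots,\mAhat_k \in \CC^{n\times n}$, solve it blockwise using ordinary matrix theory, and carry the answer back with $\ifft$. The hypothesis that every $\mAhat_j$ has $n$ distinct eigenvalues of pairwise distinct magnitude is what makes each block maximally well-behaved: it is diagonalizable, its eigenvectors are unique up to scaling, and --- crucially --- its eigenvalues admit an unambiguous sort by magnitude.

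For each block $j$ I would fix the decreasing-magnitude ordering $|\mu_{1,j}| > \cdots > |\mu_{n,j}|$ of the eigenvalues of $\mAhat_j$, with $\vv_{i,j}$ a corresponding eigenvector, and then define the candidate pairs by $\clambda_i = \ifft(\diag(\mu_{i,1},\dots,\mu_{i,k}))$ together with $\cvx_i$ determined by $\fft(\cvx_i) = \diag(\vv_{i,1},\dots,\vv_{i,k})$, for $i=1,\dots,n$. Stacking the $\cvx_i$ as columns of $\cmX \in \KK_k^{n\times n}$ makes $\fft(\cmX)$ block diagonal with $j$-th block $\mXhat_j = [\,\vv_{1,j} \mid \cdots \mid \vv_{n,j}\,]$, the eigenvector matrix of $\mAhat_j$; this block is invertible because $\mAhat_j$ has distinct eigenvalues, so Proposition~\ref{thm:basis} gives that $\cmX$ is a basis and is invertible. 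The blockwise identities $\mAhat_j \mXhat_j = \mXhat_j \diag(\mu_{1,j},\dots,\mu_{n,j})$ assemble into $\fft(\cmA)\,\fft(\cmX) = \fft(\cmX)\,\fft(\cmLambda)$ with $\cmLambda = \diag(\clambda_1,\dots,\clambda_n)$; applying $\ifft$ and right-multiplying by $\cmX^{-1}$ delivers $\cmA = \cmX \circ \cmLambda \circ \cmX^{-1}$.

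I would then check that this set is canonical and of minimum size. By the decoupling already established in this section, every eigenvalue of $\cmA$ equals $\ifft(\diag(\nu_1,\dots,\nu_k))$ for some choice of an eigenvalue $\nu_j$ of each $\mAhat_j$, and every eigenvector is assembled blockwise from eigenvectors of the $\mAhat_j$; since the $n$ candidate pairs supply, in block $j$, the full spectrum and a full eigenbasis of $\mAhat_j$, any eigenpair can be reproduced, so the set is information-complete. For minimality, a pigeonhole argument shows that any information-complete set must exhibit, in each block $j$, all $n$ distinct eigenvalues of $\mAhat_j$ among the $j$-th Fourier coefficients of its eigenvalues, hence must contain at least $n$ elements; our set attains this bound.

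The step I expect to be the real work is uniqueness, and it is where the distinct-magnitude hypothesis is indispensable. An information-complete set of size $n$ must use, in each block $j$, each eigenvalue of $\mAhat_j$ exactly once --- it is a transversal, one choice per index and block. The ordering requirement $\tabs(\clambda_1) \ge \cdots \ge \tabs(\clambda_n)$ is, by Definition~\ref{def:ordering}, an element-wise (hence blockwise) condition on the Fourier magnitudes. Since the magnitudes within each block are pairwise distinct, the unique non-increasing arrangement assigns the $i$-th largest-magnitude eigenvalue of $\mAhat_j$ to $\clambda_i$, and this forcing occurs consistently and independently across all $j$; this pins down every $\clambda_i$. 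Simplicity of each $\mu_{i,j}$ then pins down $\vv_{i,j}$ up to a scalar, i.e.\ each $\cvx_i$ up to rescaling by an orthogonal circulant --- precisely the normalization freedom already identified for eigenvectors. I would finish by remarking that the sort, and with it the whole uniqueness argument, collapses if two blockwise eigenvalues share a magnitude, so the hypothesis is genuinely needed.
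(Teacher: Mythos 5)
Your proposal is correct and follows essentially the same route as the paper's own proof: construct the candidate set by magnitude-sorting the eigenvalues of each $\mAhat_j$ in Fourier space, assemble $\cmX$ and $\cmLambda$ blockwise and invoke Proposition~\ref{thm:basis} for invertibility of $\cmX$, establish minimality by counting, and derive uniqueness from the forced non-increasing arrangement under the distinct-magnitude hypothesis. Your write-up is somewhat more explicit than the paper's (notably the pigeonhole argument for minimality and the remark on eigenvector uniqueness up to circulant rescaling), but these are elaborations of the same argument, not a different one.
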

\begin{proof}
Because all of the eigenvalues of
each $\mAhat_j$ are distinct, with distinct magnitudes, there
are $nk$ distinct numbers.  This implies that any canonical
set must have at least $n$ eigenvalues.

Let $\hat{\lambda}_j^{(i)}$ be the $i$th eigenvalue of $\mAhat_j$
ordered such that $|\hat{\lambda}_j^{(1)}| > |\hat{\lambda}_j^{(2)}| >
\ldots > |\hat{\lambda}_j^{(n)}|$.  Then $\clambda_i = \icft (
\diag \bmat{ \hat{\lambda}_1^{(i)}, & \ldots, & \hat{\lambda}_k^{(i)} } )$
is a canonical set of eigenvalues.  We now show that this set
constitutes an eigenbasis.
Let $\mAhat_j = \mXhat_j \mLambdahat_j \mXhat_j^{-1}$ be
the eigendecomposition using the magnitude ordering above.
Then $\cmX = \icft( \diag \bmat{ \mXhat_1, & \ldots, & \mXhat_k} )$
and $\cmLambda = \icft( \diag \bmat{ \mLambdahat_k, & \ldots, & \mLambdahat_k} )$
is an eigenbasis because the matrix $\cmX$ satisfies the
properties of a basis from Theorem~\ref{thm:basis}.
Note that $\cmLambda_{i,i} = \clambda_i$.

Finally, we show that the set is unique. In any canonical
set $\tabs(\clambda_1) \ge \tabs(\clambda_i)$ for $i > 1$.
In the Fourier space, this implies
$|\hat{\lambda}_j^{(1)}| \ge |\hat{\lambda}_j^{(i)}|$.  Because
all of the values $|\hat{\lambda}_j^{(i)}|$ are unique, there
is no choice for $\hat{\lambda}_j^{(1)}$ in a canonical set
and we have $|\hat{\lambda}_j^{(1)}| > |\hat{\lambda}_j^{(i)}|, i > 1$.
Consequently, $\clambda_1$ is unique.  Repeating this
argument on the remaining choices for $\clambda_i$
shows that the entire set is unique.
\end{proof}
\begin{remark} If $\mAhat_j$ has distinct eigenvalues but they do not have distinct
magnitudes, then $\cmA$ has an eigenbasis but the canonical
 set may not be unique, because $\mAhat_j$ may have two distinct eigenvalues
 with the same magnitude.
\end{remark}

Next, we show that the eigendecomposition is \emph{real-valued}
under a surprisingly mild condition.
\begin{theorem} \label{thm:real-eigendecomposition}
 Let $\cmA \in \KK_k^{n \times n}$ be real-valued with
 diagonalizable $\mAhat_j$ matrices.  If $k$ is odd, then
 the eigendecomposition
 $\cmX \circ \cmLambda \circ \cmX^{-1}$ is
 real-valued if and only if $\mAhat_1$ has real-valued
 eigenvalues.  If $k$ is even, then
 $\cmX \circ \cmLambda \circ \cmX^{-1}$ is
 real-valued if and only if $\mAhat_1$ and $\mAhat_{k/2 + 1}$
 have real-valued eigenvalues.
\end{theorem}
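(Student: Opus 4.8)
The plan is to work entirely in Fourier space and reduce everything to the reality criterion for a single $\icft$, applied block by block. Recall the criterion of \citet{davis1979-circulant} recalled earlier: $\icft(\diag(\alpha_1, \ldots, \alpha_k))$ is real-valued exactly when $\alpha_1$ is real and $\alpha_j = \conj{\alpha_{k-j+2}}$, with indices read modulo $k$. The index map $j \mapsto k-j+2$ is an involution whose fixed points are the solutions of $2(j-1) \equiv 0 \pmod{k}$, namely $j=1$ for every $k$ and, in addition, $j = k/2+1$ when $k$ is even. Extending this entrywise to block-diagonal matrices, $\icft(\diag(\mM_1, \ldots, \mM_k))$ is real-valued if and only if $\mM_j = \conj{\mM_{k-j+2}}$ for all $j$, which in particular forces $\mM_1$ (and $\mM_{k/2+1}$ when $k$ is even) to be real. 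Since $\cmA$ is real-valued, each of its entries is a real circulant, so this same symmetry already holds for its Fourier blocks: $\mAhat_j = \conj{\mAhat_{k-j+2}}$.

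First I would prove sufficiency. Assume $\mAhat_1$ (and $\mAhat_{k/2+1}$ when $k$ is even) has real eigenvalues. At each self-conjugate index the block $\mAhat_j$ is real and diagonalizable with real spectrum, hence admits a real eigendecomposition $\mAhat_j = \mXhat_j \mLambdahat_j \mXhat_j^{-1}$ with $\mXhat_j$ and $\mLambdahat_j$ real. For each conjugate pair of indices $j \ne k-j+2$, diagonalize $\mAhat_j = \mXhat_j \mLambdahat_j \mXhat_j^{-1}$ freely and then \emph{define} $\mXhat_{k-j+2} = \conj{\mXhat_j}$ and $\mLambdahat_{k-j+2} = \conj{\mLambdahat_j}$; conjugating the factorization shows this is a valid eigendecomposition of $\mAhat_{k-j+2} = \conj{\mAhat_j}$. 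By construction the block families $\{\mLambdahat_j\}$ and $\{\mXhat_j\}$ satisfy the conjugate symmetry above, so $\cmLambda = \icft(\diag(\mLambdahat_1, \ldots, \mLambdahat_k))$ and $\cmX = \icft(\diag(\mXhat_1, \ldots, \mXhat_k))$ are real-valued. Each $\mXhat_j$ is invertible, so $\fft(\cmX)$ is invertible and $\cmX$ is a basis by Proposition~\ref{thm:basis}; the same symmetry passes to $\mXhat_j^{-1}$, so $\cmX^{-1}$ is real as well. This yields a real eigendecomposition $\cmA = \cmX \circ \cmLambda \circ \cmX^{-1}$.

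For necessity I would run the argument backwards. Suppose $\cmA = \cmX \circ \cmLambda \circ \cmX^{-1}$ is a real-valued eigendecomposition, so in particular the diagonal matrix $\cmLambda$ is real. Its diagonal entries are real scalars $\clambda_i \in \KK_k$, so each satisfies the scalar reality criterion; reading off the first Fourier coordinate shows $\hat\lambda_1^{(i)}$ is real for every $i$. But the $\hat\lambda_1^{(i)}$ are precisely the diagonal entries of $\mLambdahat_1$, i.e.\ the eigenvalues of $\mAhat_1$, so $\mAhat_1$ has real eigenvalues. When $k$ is even, the Nyquist coordinate $\hat\lambda_{k/2+1}^{(i)}$ must likewise be real for every $i$, forcing $\mAhat_{k/2+1}$ to have real eigenvalues.

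The step I expect to demand the most care is justifying the entrywise extension of the scalar reality criterion to the matrix- and vector-valued $\icft$ — making precise that reality of $\icft(\diag(\mM_1, \ldots, \mM_k))$ is equivalent to the blockwise conjugate symmetry $\mM_j = \conj{\mM_{k-j+2}}$ — together with correctly pinning down the self-conjugate Fourier indices ($1$ for all $k$, and only additionally $k/2+1$ in the even case). Once that bookkeeping is in place, both implications are short, and the only genuine input is that a real diagonalizable matrix has a real eigenbasis exactly when its spectrum is real, applied at the two self-conjugate blocks.
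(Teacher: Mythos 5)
Your proof is correct and follows essentially the same route as the paper's: both reduce to the Davis reality criterion in Fourier space, exploit the conjugate symmetry $\mAhat_j = \conj{\mAhat_{k-j+2}}$ of a real matrix's Fourier blocks, construct the real decomposition by choosing conjugate-pair eigendecompositions at the non-fixed indices, and obtain necessity by reading off the self-conjugate blocks ($j=1$, and $j=k/2+1$ for even $k$). If anything, your write-up is more careful than the paper's, which tacitly assumes that a real diagonalizable block with real eigenvalues admits a real eigenbasis and does not explicitly verify that $\cmX^{-1}$ inherits the conjugate symmetry.
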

\begin{proof}
First, if $\cmA$ has a real-valued
eigendecomposition, then we have that
$\mXhat_1$ is real and also that $\mXhat_{k/2+1}$
is real when $k$ is even.  Likewise,
$\mLambdahat_1$ is real and $\mLambdahat_{k/2+1}$
is real when $k$ is even.  Thus, $\mAhat_1$ (and
also $\mAhat_{k/2+1}$ when $k$ is even) have real-valued
eigenvalues and vectors.

When $\mAhat_1$ (and $\mAhat_{k/2+1}$ for $k$ even)
have real-valued eigenvalues and vectors, then note that
we can choose eigenvalues and eigenvectors of the
other matrices $\mAhat_j$, which may be complex, in
complex-conjugate pairs so as to satisfy the condition
for a real-valued inverse Fourier transforms.  This
happens because when $\cmA$ is real, then
$\mAhat_1$ is real and
 $\mAhat_j = \conj{\mAhat_{k-j+2}}$ by the properties
 of the Fourier transform \cite{davis1979-circulant}.
 Thus for each eigenpair $\hat{\lambda}_j, \vxhat_j$ of
 $\mAhat_j$, the  pair $\conj{\hat{\lambda}}_j,
 \conj{\vxhat}_j$ is an eigenpair for $\mAhat_{k-j+2}$.
 Consequently, if we always choose these complex conjugate pairs
 for all $j$ besides $j=1$ (and $j=k/2+1$ for $k$ even), then
 the result of the inverse Fourier transform will be real-valued.
\end{proof}

Finally, we note that if the scalars of a matrix are
padded with zeros to transform them into the circulant algebra, then 
the canonical set of eigenvalues are nothing but tuples that consist of 
the eigenvalues of the original matrix in the first entry, 
padded with zeros as well. To justify this observation, let 
$\cmA \in \KK_k^{n \times n}$ have $\cel{A}_{i,j}
= \csbmat{G_{i,j}, & 0, & \ldots, & 0}$
for a  matrix $\mG \in \mathbb{R}^{n \times n}$.
Also, let $\lambda_1, \ldots, \lambda_m \quad (m \le n)$ be the eigenvalues of
$\mG$ ordered such that
$|\lambda_1| \ge |\lambda_2| \ge \cdots \ge |\lambda_m|$.
Then
$\cft(\cel{A}_{i,j}) = \diag\sbmat{ G_{i,j}, & \ldots, & G_{i,j} }$
and thus $\mAhat_{j} = \mG$ for all $j$.  Thus, we only need to combine
the same $m$ eigenvalues of each $\mAhat_j$ to construct eigenvalues of
$\cmA$.  For the eigenvalues $\clambda_i$, we have $\cft(\clambda) =
\diag\sbmat{ \lambda_i, & \ldots, & \lambda_i }$, thus the given set
is  canonical because of the same argument used
in the proof of Theorem~\ref{thm:canonical}.


We end this section by noting that much of the above analysis can be generalized
to non-simple eigenvalues and vectors using
the Jordan canonical form of the $\mAhat_j$ matrices.

\section{The power method and the Arnoldi method}
\label{sec:algs}

In what follows, we show that the power method in
the circulant algebra computes the eigenvalue $\clambda_1$
in the canonical set of eigenvalues.  This result shows
how the circulant algebra matches the behavior of
the standard power method.   As part of our analysis, we show
that the power method decouples into $k$ independent
power iterations in Fourier space and is equivalent
to a subspace iteration method.  Second,
we demonstrate the Arnoldi method in the circulant algebra.
In Fourier space, the Arnoldi method is also equivalent to
the Arnoldi algorithm on independent problems, and it also corresponds
to a particular block Arnoldi procedure.

\subsection{The power method}
\label{sec:power}

Please see the left half of Figure~\ref{fig:power-circulant}
for the sequence of operations in the power method in the
circulant algebra.
In fact, it is not too different from the standard
power method in Figure~\ref{fig:power-standard}.  We replace
$\mA \vx$ with $\cmA \circ \cvx$ and use the norm and
inverse from Section~\ref{sec:ops}. We'll return
to the convergence criteria shortly.
As we show next, the algorithm runs
$k$ independent power methods in Fourier space.  Thus,
the right half of Figure~\ref{fig:power-circulant} shows
the equivalent operations in Fourier space.

\begin{figure}

\begin{minipage}[t]{0.45\textwidth}
\begin{algorithmic}[1]
\REQUIRE  $\cmA, \cvx\itn{0}, \tau$
\STATE \COMMENT{Kept for alignment} $\vphantom{\mAhat \leftarrow \fft(\cmA), \mXhat\itn{0} \leftarrow \fft(\cvx\itn{0})}$
\STATE $\cvx\itn{0} \leftarrow \cvx\itn{0}\circ\normof{\cvx\itn{0}}^{-1}$
   $\vphantom{\left( { \mXhat\itn{0}}^* \mXhat\itn{0} \right)^{-1/2}}$
\FOR {$k=1, \ldots$ until convergence}
  \STATE $\cvy\itn{k} \leftarrow \cmA \circ \cvx\itn{k-1}$
      $\vphantom{\mYhat\itn{k} \leftarrow \mAhat \mXhat\itn{k-1}}$
  \STATE $\calpha\itn{k} \leftarrow \normof{\cvy\itn{k}}$
      $\vphantom{\mRhat\itn{k} \leftarrow {\mYhat\itn{k}}^* \mYhat\itn{k}}$
  \STATE $\cvx\itn{k} \leftarrow \cvy\itn{k} \circ {\calpha\itn{k}}^{-1}$
     \IF {converged}
    \RETURN $\cvx\itn{k}$ $\vphantom{\mXhat\itn{k}}$
  \ENDIF
\ENDFOR
\end{algorithmic}
\end{minipage}
\quad
\begin{minipage}[t]{0.5\textwidth}
\begin{algorithmic}[1]
\REQUIRE  $\cmA, \cvx\itn{0}, \tau$
\STATE $\mAhat \leftarrow \fft(\cmA), \mXhat\itn{0} \leftarrow \fft(\cvx\itn{0})$
\STATE $\mXhat\itn{0} \leftarrow \mXhat\itn{0}
             \left( { \mXhat\itn{0}}^* \mXhat\itn{0} \right)^{-1/2}$
\FOR {$k=1, \ldots$ until convergence}
  \STATE $\mYhat\itn{k} \leftarrow \mAhat \mXhat\itn{k-1}$
      $\vphantom{\cvy\itn{k} \leftarrow \cmA \circ \cvx\itn{k-1}}$
  \STATE \label{alg:block-power-norm-1}
    $\mRhat\itn{k} \leftarrow {\mYhat\itn{k}}^* \mYhat\itn{k}$
  \STATE \label{alg:block-power-norm-2}
  $\mXhat\itn{k} \leftarrow \mYhat\itn{k} {\mRhat\itn{k}}^{-1/2}$
    \IF {converged}
    \RETURN $\ifft(\mXhat\itn{k})$
  \ENDIF
\ENDFOR
\end{algorithmic}
\end{minipage}
\caption{The power method in the circulant algebra (left)
and the power method in the circulant algebra after
transformation with the fast Fourier transform (right).
We address convergence criteria in Section~\ref{sec:power}}.
\label{fig:power-circulant}
\end{figure}

To analyze the power
method, consider the key iterative operation in the power method when
transformed into Fourier space:
\[ \begin{aligned}
& \fft( \cmA \circ \cvx \circ (\normof{\cmA \circ \cvx})^{-1} ) \\
& \qquad  = \fft(\cmA) \fft(\cvx) (\fft(\cvx)^* \fft(\cvx))^{-1/2}  \\
& \qquad = \sbmat{\mAhat_1\vxhat_1 \\ & \ddots \\ & & \mAhat_k \vxhat_k}
  \left( \sbmat{\mAhat_1\vxhat_1 \\ & \ddots \\ & & \mAhat_k \vxhat_k}^*
   \sbmat{\mAhat_1\vxhat_1 \\ & \ddots \\ & & \mAhat_k \vxhat_k} \right)^{-1/2}.\\
  \end{aligned} \]
Now,
\[
\begin{aligned}
\left(
  \sbmat{\mAhat_1\vxhat_1 \\ & \ddots \\ & & \mAhat_k \vxhat_k}^*
  \sbmat{\mAhat_1\vxhat_1 \\ & \ddots \\ & & \mAhat_k \vxhat_k}
\right)^{-1/2}
& =
\sbmat{ \vxhat_1^* \mAhat_1^* \mAhat_1 \vxhat_1 \\
        & \ddots \\ & & \vxhat_k^* \mAhat_k^* \mAhat_k \vxhat_k }^{-1/2}
\\ & =
\sbmat{ \normof{\mAhat_1 \vxhat_1}^{-1} \\
        & \ddots \\ & & \normof{\mAhat_k \vxhat_k}^{-1} }.
\end{aligned}
\]
Thus
\[ \fft( \cmA \circ \cvx \circ (\normof{\cmA \circ \cvx})^{-1} )
= \sbmat{\mAhat_1 \vxhat_1 / \normof{\mAhat_1 \vxhat_1} \\
         & \ddots \\
         & & \mAhat_k \vxhat_k / \normof{\mAhat_1 \vxhat_1}}.
\]
The key iterative operation, $\cmA \circ \cvx \circ (\normof{\cmA \circ \cvx})^{-1}$,
 corresponds to  \emph{one step} of the standard power method on each
matrix $\mAhat_j$.  From this derivation, we arrive at the following theorem, whose 
proof follows immediately from the convergence proof of the power method for a matrix.

\begin{theorem}
 Let $\cmA \in \KK_k^{n \times n}$ have a canonical
 set of eigenvalues $\clambda_1, \ldots, \clambda_n$
 where $|\clambda_1| > |\clambda_2|$, then
 the power method in the circulant algebra
 convergences to an eigenvector $\cvx_1$
 with eigenvalue $\clambda_1$.
\end{theorem}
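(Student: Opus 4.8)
The plan is to exploit the decoupling that the preceding derivation has already established. Since one application of the key iterative map $\cvx \mapsto \cmA \circ \cvx \circ (\normof{\cmA \circ \cvx})^{-1}$ equals, in Fourier space, one normalized power step $\vxhat_j \mapsto \mAhat_j \vxhat_j / \normof{\mAhat_j \vxhat_j}$ applied \emph{independently} to each block $j = 1, \ldots, k$, a routine induction on the iteration count shows that the entire circulant power iteration is nothing but $k$ simultaneous, independent standard power iterations on the matrices $\mAhat_1, \ldots, \mAhat_k$. Because $\fft$ is a linear bijection that commutes with the iteration, it suffices to prove convergence of each of these $k$ scalar iterations and then transport the limit back through $\icft$.

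Next I would translate the hypothesis into a statement about the Fourier blocks. Writing $\clambda_i = \icft(\diag\sbmat{\hat{\lambda}_1^{(i)} & \ldots & \hat{\lambda}_k^{(i)}})$ with $\hat{\lambda}_j^{(i)}$ the $i$-th eigenvalue of $\mAhat_j$ (ordered by magnitude within each block), the assumption $|\clambda_1| > |\clambda_2|$, read through $\tabs$ and the ordering of Definition~\ref{def:ordering}, unfolds into the strict spectral-gap condition $|\hat{\lambda}_j^{(1)}| > |\hat{\lambda}_j^{(2)}|$ for every $j$. This is exactly the gap required by the classical convergence theorem for the power method on each $\mAhat_j$. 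Invoking that theorem, each iterate $\vxhat_j\itn{k}$ converges in direction to the dominant eigenvector of $\mAhat_j$, under the usual mild condition that the starting block $\vxhat_j\itn{0}$ has a nonzero component along that eigenvector.

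The main obstacle is the bookkeeping of phases. The classical power method pins down the eigenvector only up to a unimodular scalar $e^{\ii\theta_j}$, and these phases may differ across the $k$ blocks. Collecting them, the limiting iterate in Fourier space is the dominant eigenvector of each $\mAhat_j$ scaled by its own $e^{\ii\theta_j}$; transported through $\icft$, this diagonal of unimodular phases is precisely an orthogonal circulant, i.e. an \emph{angle} in the sense of Section~\ref{sec:fft}. Hence the circulant iterates converge to $\cvx_1 = \icft(\diag\sbmat{\vxhat_1^{(1)} & \ldots & \vxhat_k^{(1)}})$ up to multiplication by an angle --- exactly the indeterminacy already acknowledged for eigenvectors in this algebra, and the reason the convergence test in Figure~\ref{fig:power-circulant} is framed so as to quotient it out. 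Finally, since $\mAhat_j \vxhat_j^{(1)} = \hat{\lambda}_j^{(1)} \vxhat_j^{(1)}$ holds blockwise for the limit, we recover $\fft(\cmA) \fft(\cvx_1) = \fft(\cvx_1) \fft(\clambda_1)$, so that $\cmA \circ \cvx_1 = \cvx_1 \circ \clambda_1$; thus $\cvx_1$ is an eigenvector with eigenvalue $\clambda_1$, which completes the argument.
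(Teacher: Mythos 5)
Your proposal is correct and takes essentially the same route as the paper: the paper establishes the identical Fourier-space decoupling immediately before the theorem and then states that the result ``follows immediately from the convergence proof of the power method for a matrix.'' Your write-up simply fills in the details the paper leaves implicit --- unfolding $|\clambda_1| > |\clambda_2|$ into a per-block spectral gap $|\hat{\lambda}_j^{(1)}| > |\hat{\lambda}_j^{(2)}|$ via the canonical ordering and Definition~\ref{def:ordering}, the standard nonzero-component condition on the starting vector, and the per-block phase indeterminacy, which correctly reassembles into multiplication by an angle (orthogonal circulant) and is exactly what the convergence criterion of Figure~\ref{fig:power-circulant} quotients out.
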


A bit tangentially, an eigenpair
in the Fourier space is a simple instance of
a \emph{multivariate} eigenvalue
problem~\cite{Chu1993-MEP}.
The general multivariate eigenvalue problem is
$\sum_{j} \mA_{i,j} \vx_j = \lambda_i \vx_i \qquad i=1,\ldots$,
whereas we study the same system, albeit diagonal.
\citet{Chu1993-MEP} did study a power method for the more
general problem and showed local convergence;
however our diagonal situation is sufficiently
simple for us to state stronger results.


\paragraph{Convergence Criteria}
A simple measure
such as $\normof{\cvx\itn{k}-\cvx\itn{k-1}} \le \cel{\tau}$,
with $\cel{\tau} = \csbmat{\tau & 0 & \ldots & 0}$ will not detect
convergence.  As mentioned in the description of the standard
power method in Figure~\ref{fig:power-standard}, this test
can fail when the eigenvector changes angle.  Here, we have
the more general notion of an angle for each element,
and eigenvectors are unique up to a choice of angle.  Thus,
we first normalize angles before comparing the
we use the convergence criteria 
\begin{equation} \label{eq:power-convergence}
 \normof{ \tangle(\cvx_1\itn{k})^{-1} \circ \cvx\itn{k}
     - \tangle(\cvx\itn{k-1}_1)^{-1} \circ \cvx\itn{k-1} } < \cel{\tau}.
\end{equation}
In the Fourier space, this choice requires that \emph{all}
of the independent problems have converged to a
tolerance of $\tau$, which is a viable practical choice.
An alternative convergence
criteria is to terminate when the \emph{eigenvalue}
stops changing, although this may occur significantly
before the eigenvector has converged.

\paragraph{Subspace iteration}
We now show that the power method is equivalent
to subspace iteration in Fourier space.  Subspace
iteration is also known as
``orthogonal iteration'' or the ``block-power method.''
Given a starting
block of vectors $\mX\itn{0}$, the iteration is
\[ \mY \leftarrow \mA \mX\itn{k}, \qquad
     \mX\itn{k+1}, \mR\itn{k+1} = \text{\texttt{qr}}(\mY).
\]
On the surface, there is nothing to relate this iteration to
our power method, even in Fourier space.  The relationship,
however, follows because all of our operations in
Fourier space occur with \emph{block-diagonal} matrices.
Note that for a block-diagonal matrix of vectors,
which is what $\mXhat\itn{k}$ is, the QR factorization
just normalizes each column.  In other words, the
result is a diagonal matrix $\mR$.  This simplification
shows that steps
\ref{alg:block-power-norm-1}-\ref{alg:block-power-norm-2}
in the Fourier space algorithm are equivalent to
the QR factorization in subspace iteration.

\paragraph{Breakdown}

One problem with this
iterative approach is that it can
encounter ``zero divisors'' as scalars when
running these algorithms.  These occur when the
matrices in Fourier space are not invertible.
We have not explicitly
addressed this situation and note that the same issues arise
in block methods when some of the quantities
become singular.  The analogy
with the block method may provide an appropriate
solution.  For example, if the scalar $\calpha\itn{k}$
is a zero-divisor, then we could use the QR factorization
of $\mYhat\itn{k}$ -- as suggested by the equivalence
with subspace iteration -- instead. 

\subsection{The Arnoldi process}

The Arnoldi method is a cornerstone of modern matrix computations.
Let $\mA$ be an $n \times n$ matrix with real valued entries.
Then the Arnoldi method is a technique to build an orthogonal
basis for the Krylov subspace
\[
 \mathcal{K}_t(\mA,\vv) = \tspan\{ \vv, \mA \vv, \ldots, \mA^{t-1} \vv \} ,
\]
where $\vv$ is an initial vector.
Instead of using this power basis, the Arnoldi
process derives a set of orthogonal vectors that span the same space
when computed with exact arithmetic.  The standard method is
presented in Figure~\ref{fig:arnoldi-circulant}(a).
From this procedure, we have the Arnoldi decomposition of a matrix:
\[ \mA \mQ_t = \mQ_{t+1} \mH_{t+1,t} \]
where $\mQ_t$ is an $n \times t$ matrix, and $\mH_{t+1,t}$ is a $(t+1) \times t$
upper Hessenberg matrix.
Arnoldi's orthogonal subspaces $\mQ$ enable efficient algorithms for both
solving large scale linear systems~\cite{Krylov1931-equations} and computing eigenvalues and
eigenvectors \cite{Arnoldi1951-minimized}.

Using our repertoire of operations, the Arnoldi
method in the circulant algebra is presented in
Figure~\ref{fig:arnoldi-circulant}(b).  The circulant Arnoldi process decoupled
via the $\cft$ is also shown in Figure~\ref{fig:arnoldi-circulant}(c).

We make three observations here.
First, the decoupled ($\cft$) circulant Arnoldi process
is equivalent to individual Arnoldi processes on each matrix $\mAhat_j$.
This follows by a similar analysis used to
show the decoupling result about the power method.
The verification
of this fact for the Arnoldi iteration is a bit more tedious and thus
we omit this analysis.
Second, the same decoupled process
is equivalent to a block Arnoldi process.
This also follows for the same
reason the equivalent result held for the power method: the QR
factorization of a block-diagonal matrix-of-vectors is just
a normalization of each vector.
Third,
we produce an Arnoldi factorization:
\[ \cmA \circ \cmQ_{t} = \cmQ_{t+1} \circ \cmH_{t+1,t}. \]
In fact, this outcome is a corollary of the first property
and follows from applying $\icft$ to the same analysis.

\begin{figure}

\begin{minipage}[t]{0.31\linewidth}
(a) Arnoldi for $\RR^{n \times n}$

 \begin{algorithmic}[1]
   \REQUIRE  $\mA, \vb, t$
   \STATE $\vphantom{\mAhat \leftarrow \cft(\cmA)}$
   \STATE $\vphantom{\mBhat \leftarrow \cft(\cvb)}$
   \STATE $\vq_1 \leftarrow \vb/\normof{\vb}$
	   $\vphantom{\mQhat_1 \leftarrow \mBhat (\mBhat^* \mBhat)^{-1/2}}$
   \FOR {$j = 1, \ldots, t$}
     \STATE $\vz \leftarrow \mA \vq_1$
     \FOR {$i = 1, \ldots, i$}
       \STATE $H_{i,j} \leftarrow \vq_i^* \vz$
			   $\vphantom{\cel{H}_{i,j} \leftarrow \iprod{\cvq_i}{\cvz}}$
				 $\vphantom{\mHhat_{i,j} \leftarrow \mQhat_i^* \mZhat}$
			 \STATE $\vphantom{\cmH_{i,j} \leftarrow \icft(\mHhat_{i,j})}$
       \STATE $\vz \leftarrow \vz - H_{i,j} \vq_i$
			  $\vphantom{\cvz \leftarrow \cvz - \cel{H}_{i,j} \cvq_i}$
				$\vphantom{\mZhat \leftarrow \mZhat - \mQhat_i \mHhat_{i,j}}$
     \ENDFOR
     \STATE $H_{j+1,j} \leftarrow \normof{\vz}$
		   $\vphantom{\mHhat_{j+1,j} \leftarrow (\mZhat^* \mZhat)^{1/2}}$
     \STATE $\vphantom{\cmH_{j+1,j} \leftarrow \icft(\mHhat_{j+1,j})}$
     \STATE $\vq_{j+1} \leftarrow \cvz / H_{j+1,j}$
		   $\vphantom{\mQhat_{j+1} \leftarrow \mZhat \mHhat_{j+1,j}^{-1}}$
			 $\vphantom{\cvq_{j+1} \leftarrow \cvz \circ \cel{H}_{j+1,j}^{-1}}$
     \STATE $\vphantom{\cvq_{j+1} \leftarrow \icft(\mQhat_{j+1})}$
   \ENDFOR
 \end{algorithmic}
\end{minipage}
 \begin{minipage}[t]{0.31\linewidth}
(b) Arnoldi for $\KK_k^{n \times n}$

\begin{algorithmic}[1]
\REQUIRE  $\cmA, \cvb, t$
\STATE $\vphantom{\mAhat \leftarrow \cft(\cmA)}$
\STATE $\vphantom{\mBhat \leftarrow \cft(\cvb)}$
\STATE $\cvq_1 \leftarrow \vb \circ \normof{\cvb}^{-1}$
  $\vphantom{\mQhat_1 \leftarrow \mBhat (\mBhat^* \mBhat)^{-1/2}}$
\FOR {$j = 1, \ldots, t$}
  \STATE $\cvz \leftarrow \cmA \circ \cvq_j$
  \FOR {$i = 1, \ldots, j$}
    \STATE $\cel{H}_{i,j} \leftarrow \iprod{\cvq_i}{\cvz}$
		  $\vphantom{\mHhat_{i,j} \leftarrow \mQhat_i^* \mZhat}$
    \STATE $\vphantom{\cmH_{i,j} \leftarrow \icft(\mHhat_{i,j})}$
    \STATE $\cvz \leftarrow \cvz - \cel{H}_{i,j} \circ \cvq_i$
		  $\vphantom{\mZhat \leftarrow \mZhat - \mQhat_i \mHhat_{i,j}}$
  \ENDFOR
  \STATE $\cel{H}_{j+1,j} \leftarrow \normof{\cvz}$
	  $\vphantom{\mHhat_{j+1,j} \leftarrow (\mZhat^* \mZhat)^{1/2}}$
  \STATE $\vphantom{\cmH_{j+1,j} \leftarrow \icft(\mHhat_{j+1,j})}$
  \STATE $\cvq_{j+1} \leftarrow \cvz \circ \cel{H}_{j+1,j}^{-1}$
	  $\vphantom{\mQhat_{j+1} \leftarrow \mZhat \mHhat_{j+1,j}^{-1}}$
  \STATE $\vphantom{\cvq_{j+1} \leftarrow \icft(\mQhat_{j+1})}$
\ENDFOR
\end{algorithmic}
\end{minipage}
\begin{minipage}[t]{0.35\linewidth}
(c) Unrolled Arnoldi for $\KK_k^{n \times n}$

 \begin{algorithmic}[1]
\REQUIRE $\cmA, \cvb, t$
\STATE $\mAhat \leftarrow \cft(\cmA)$
\STATE $\mBhat \leftarrow \cft(\cvb)$
\STATE $\mQhat_1 \leftarrow \mBhat (\mBhat^* \mBhat)^{-1/2}$
\FOR {$j = 1, \ldots, t$}
  \STATE $\mZhat \leftarrow \mAhat \mQhat_j$
  \FOR {$i = 1, \ldots, j$}
    \STATE $\mHhat_{i,j} \leftarrow \mQhat_i^* \mZhat$
			$\vphantom{\cel{H}_{i,j} \leftarrow \iprod{\cvq_i}{\cvz}}$
    \STATE $\cmH_{i,j} \leftarrow \icft(\mHhat_{i,j})$
    \STATE $\mZhat \leftarrow \mZhat - \mQhat_i \mHhat_{i,j}$
		  $\vphantom{\cvz \leftarrow \cvz - \cel{H}_{i,j} \cvq_i}$
  \ENDFOR
  \STATE $\mHhat_{j+1,j} \leftarrow (\mZhat^* \mZhat)^{1/2}$
  \STATE $\cmH_{j+1,j} \leftarrow \icft(\mHhat_{j+1,j})$
  \STATE $\mQhat_{j+1} \leftarrow \mZhat \mHhat_{j+1,j}^{-1}$
	  $\vphantom{\cvq_{j+1} \leftarrow \cvz \circ \cel{H}_{j+1,j}^{-1}}$
  \STATE $\cvq_{j+1} \leftarrow \icft(\mQhat_{j+1})$
\ENDFOR
 \end{algorithmic}

\end{minipage}

\caption{Arnoldi methods.  Algorithm (a) shows the standard Arnoldi process.
Algorithm (b) shows the Arnoldi process in the circulant algebra, and
Algorithm (c) shows the set of operations in (b) but expressed in
the Fourier space.}
\label{fig:arnoldi-circulant}
\end{figure}

This discussion raises an interesting question, why iterate
on all problems simultaneously?  One case where this is
advantageous is with sparse problems; and we return to this issue
in the concluding discussion (Section~\ref{sec:conclusion}).

\section{A Matlab package}
\label{sec:computations}


The \Matlab environment is a convenient playground
for algorithms involving matrices.  We have extended
it with a new 
class implementing the circulant algebra as a native
\Matlab object.  The
name of the resulting package and class 
is \texttt{camat}: \emph{circulant algebra matrix}.
While we will show some non-trivial examples of our package
later, let us start with a small example to give the
flavor of how it works.
\begin{quote}
\begin{verbatim}
A = cazeros(2,2,3); % creates a camat type
A(1,1) = cascalar([2,3,1]); A(1,2) = cascalar([8,-2,0]);
A(2,1) = cascalar([-2,0,2]); A(2,2) = cascalar([3,1,1]);
eig(A)          % compute eigenvalues as in Example 2;
\end{verbatim}
\end{quote}
The output, which matches the non-diagonal matrix in Example~\ref{ex:diag-evals}, is:
\begin{quote}
\begin{verbatim}
ans =
(:,:,1) =  % the first eigenvalue
    1.9401
   -1.6814
    5.7413
(:,:,2) =  % the second eigenvalue
    3.0599
    3.6814
   -1.7413
\end{verbatim}
\end{quote}


Internally, each element $\cmA \in \KK_k^{m \times n}$
is stored as a $k \times n \times m$ array \emph{along
with} its $\cft$ transformed data.  Each scalar is
stored by the $k$ parameters defining it.  To describe
this storage, let us introduce the notation
\[ \tvec(\calpha) \eqdef \sbmat{ \alpha_1 \\ \vdots \\ \alpha_k} = \tcirc(\calpha) \ve_1 , \]
to label the vector of $k$ parameters explicitly.  Thus, 
we store $\tvec(\calpha)$ for $\alpha \in \KK_k$.  This storage
corresponds to storing each scalar $\KK_k$ consecutively
in memory.   The matrix is then stored by rows.  We store
the data for the diagonal elements of the $\cft$ transformed
version in the same manner; that is, $\diag(\cft(\calpha))$
is stored as $k$ consecutive complex-valued scalars.  The
organization of matrices and vectors for the $\cft$ data
is also \emph{by row}.
The reason we store the data by row is so we can take
advantage of \Matlab's standard display operations.

At the moment, our implementation stores the elements
in both the standard and Fourier transformed space.  The rationale
behind this choice was to make it easy to investigate the results
in this manuscript.  Due to the simplicity of the operations
in the Fourier space, most of the functions on \texttt{camat}
objections use the Fourier coefficients to compute a result efficiently
and then compute
the inverse Fourier transform for the $\tvec$ representation.
Hence, rather than incurring for the Fourier transform and
inverse Fourier transform cost for each operation, we only
incur the cost of the inverse transform.  Because so few operations
are easier in the standard space, we hope to eliminate the
standard $\tvec$ storage in a future version of the code to accelerate
it even further.

We now show how the overloaded operation \texttt{eig} works
in Figure~\ref{fig:camat-eig}.  This procedure, inspired
by Theorem~\ref{thm:real-eigendecomposition}, implements
the process to get real-valued canonical eigenvalues
and eigenvectors of a real-valued matrix in
the circulant algebra. The slice \verb#Af(j,:,:)#
is the matrix $\mAhat_j^T$.  Here, the \emph{real-valued}
transpose results from
the storage-by-rows instead of the storage-by-columns.
The code proceeds by computing the eigendecomposition of each
$\mAhat_j$ with a special sort applied to produce the canonical
eigenvalues.  After all of the eigendecompositions are finished,
we need to transpose their output.  Then it feeds them to
the \texttt{ifft} function to generate the data in $\tvec$
form.

\begin{figure}
\begin{verbatim}
function [V,D] = eig(A)
% CAEIG The eigenvalue routine in the circulant algebra
Af = A.fft; k = size(Af,1);           % extract data from object
if any(imag(A.data(:))), error('specialized for real values'); end
[Vf,Df] = deal(zeros(Af));            % allocate data of size (k,n,n)
[Vf(1,:,:),Df(1,:,:)] = sortedeig(squeeze(Af(1,:,:)).');
for j=2:floor(k/2)+1
  [Vf(j,:,:),Df(j,:,:)] = sortedeig(sqeeze(Af(j,:,:)).');
  if j~=k/2+1                         % skip last when k is even
    Vf(k-j+2,:,:) = conj(Vf(j,:,:)); Df(k-j+2,:,:) = conj(Df(j,:,:));
  end
end
% transpose all the data back.
for j=1:k, Vf(j,:,:) = Vf(j,:,:).'; Df(j,:,:) = Df(j,:,:).'; end
V = camatcft(ifft(Vf),Vf);            % create classed output
D = camatcft(ifft(Df),Df);
function [V,D]=sortedeig(A)
[V,D] = eig(A); d = diag(D); [ignore p] = sort(-abs(d));
V = V(:,p); D = D(p,p);               % apply the sort
\end{verbatim}
\caption{The implementation of the eigenvalue computation
in our package.  Please see the discussion in the text.}
\label{fig:camat-eig}
\end{figure}

In a similar manner, we overloaded the standard assignment
and indexing operations e.g.\ \texttt{a = A(i,j); A(1,1) = a};
the standard Matlab arithmetic operations \texttt{+, -, *, /,}
\verb#\#;
and the functions \texttt{abs, angle, norm, conj, diag, eig,
hess, mag, norm, numel, qr, rank, size, sqrt, svd}.

All of these operations have been mentioned or are self explanatory, 
except \texttt{mag}. It  is a {\em magnitude
function}, and we discuss it in detail in~\ref{sec:magnitude}.

Using these overloaded operations, implementing the power method
is straightforward; see Figure~\ref{fig:camat-power}. 
We note that the power method and Arnoldi methods can be further optimized 
by implementing them directly in Fourier space. This remains as an item for future work.


\begin{figure}
\begin{verbatim}
for iter=1:maxiter
  Ax = A*x;
  lambda = x'*Ax;
  x2 = (1./ norm(Ax))*Ax;
  delta = mag(norm(1./angle(x(1))*x-1./angle(x2(1))*x2));
  if delta<tol, break, end
end
\end{verbatim}
\caption{The implementation of the power method using our package.}
\label{fig:camat-power}
\end{figure}



\section{Numerical examples}
\label{sec:example}

In this section, we present a numerical example using
the code we described in Section~\ref{sec:computations}.  The
problem we consider is the Poisson equation on a regular
grid with a mixture of periodic and fixed boundary conditions:
\[ -\Delta u(x,y) = f(x,y) \qquad u(x,0) = u(x,1), u(0,y) = y(1,y) = 0 \qquad (x,y) \in [0,1] \times [0,1]. \]
Consider a uniform mesh and the standard 5-point discrete Laplacian:
\[ -\Delta u(x_i,y_j) \approx -u(x_{i-1},y_j) -u(x_i,y_{j-1}) + 4 u(x_i,y_j) - u(x_{i+1},y_j) - u(x_i,y_{j+1}). \]
After applying the boundary conditions and organizing the
unknowns of $u$
in $y$-major order, an approximate solution $u$ is given
by solving an $N(N-1)\times N(N-1)$ block-tridiagonal, circulant-block system:
\[ \underbrace{\bmat{
    \mC & -\mI \\
    -\mI & \mC & \ddots \\
    & \ddots & \ddots & -\mI \\
    & & -\mI & \mC \\
   }}_{\mA} \underbrace{\bmat {
    \vu(x_1, \cdot) \\
    \vu(x_2, \cdot) \\
    \vdots \\
    \vu(x_{N-1}, \cdot)
  }}_{\vu} = \underbrace{\bmat{
    \vf(x_1, \cdot) \\
    \vf(x_2, \cdot) \\
    \vdots \\
    \vf(x_{N-1}, \cdot)
  }}_{\vf}, \qquad
  \mC = \underbrace{\bmat{
    4 & -1 & & -1\\
    -1 & 4 & \ddots \\
    & \ddots & \ddots & -1 \\
    -1 & & -1 & 4 \\
   }}_{N \times N} ,
  \]
that is, $\mA \vu = \vf$.
Because of the circulant-block structure, this system
is equivalent to
\[ \cmA \circ \cvu = \cvf \]
where $\cmA$ is an ${N-1} \times {N-1}$ matrix of $\KK_{N}$ elements,
$\cvu$ and $\cvf$ have compatible sizes, and
\[ \mA =\tcirc(\cmA) \qquad  \vu =  \tvec(\cvu)  \qquad \vf = \tvec(\cvf). \]  We now investigate this matrix and linear system
with $N = 50$.

\subsection{The power method}

We first study the behavior of the power method
on $\cmA$.
The canonical eigenvalues of $\cmA$ are
\[ \clambda_j = \csbmat{ 4 + 2\cos(j \pi / {N}), -1, 0, \ldots, 0, -1 }. \]
To see this result, let
$\clambda(\mu) =\csbmat{ \mu, -1, 0, \ldots, 0, -1 }.$
Then
\[ (\cmA - \clambda(\mu) \circ \cmI)
 = \bmat{ (4-\mu) \circ \cel{1} & -1 \circ \cel{1} \\
          -1 \circ \cel{1} & (4-\mu) \circ \cel{1} & \ddots \\
          & \ddots & \ddots & -1 \circ \cel{1} \\
          & & -1 \circ \cel{1} & (4-\mu) \circ \cel{1} }.
\]
The \emph{canonical} eigenvalues of $\cmA - \clambda(\mu) \circ \cmI$
can be determined by choosing $\mu$ to be an eigenvalue of $\mT =
\texttt{tridiag}(-1,4,-1)$.  These are given
 by setting $\mu = 4 + 2\cos(j \pi / {N})$,
where each choice $j=1, \ldots, N-1$ produces a canonical
eigenvalue $\clambda_j$. 
From these canonical eigenvalues, we can estimate the convergence
behavior of the power method.  Recall that the algorithm runs
independent power methods in Fourier space.  Consequently, these
rates are given by $\hat{\lambda}_2/\hat{\lambda}_1$ for each
matrix $\mAhat_j$.  To state these ratios compactly,
let $\gamma_1 = 4 + 2\cos( \pi / {N})$ and $\gamma_2 =
4 + 2\cos(2 \pi / {N})$; also let $\delta_j = 2 \cos (-\pi + 2\pi(j-1)/N)$.  
For $N$ even, 
\[
\begin{aligned}
\cft(\clambda_1) & =
  \diag\sbmat{ \gamma_1 + \delta_1, & \ldots, \gamma_1 + \delta_N} \\
\cft(\clambda_2) & =
  \diag\sbmat{ \gamma_2 + \delta_1, & \ldots, \gamma_2 + \delta_N}	
\end{aligned}
\]	
Thus, the convergence ratio for $\mAhat_j$ is $(\gamma_2 + \delta_j)/(\gamma_1 + \delta_j)$.
The largest ratio (fastest converging)  corresponds to the smallest value of $\delta_j$, which is $\delta_1$.  The smallest ratio (slowest
converging) corresponds to the largest value of $\delta_j$,
which is $\delta_{N/2+1}$ in this case.  
(This choice will slightly change in an obvious manner if $N$ is odd.)  Evaluating these ratios yields
\[ \begin{aligned}
\min_j \frac{\lambda_2(\mAhat_j)}{\lambda_1(\mAhat_j)} & = \frac{\gamma_2 + \delta_1}{\gamma_1 + \delta_1} = \frac{2 + 2\cos(2\pi/N)}{2 + 2\cos(\pi/N)} && \qquad \text{(fastest)} \\
\max_j \frac{\lambda_2(\mAhat_j)}{\lambda_1(\mAhat_j)} & = \frac{\gamma_2 + \delta_{N/2+1}}{\gamma_1 + \delta_{N/2+1}} = \frac{6 + 2\cos(2\pi/N)}{6 + 2\cos(\pi/N)} && \qquad \text{(slowest)}. \\
\end{aligned}
\]
Based on this analysis, we expect the eigenvector to
converge linearly with the rate $\frac{6 + 2\cos(2\pi/N)}{6 + 2\cos(\pi/N)}$.
By the standard theory for the power method, expect the eigenvalues to converge twice as fast.

Let $\cel{\rho}$ be the eigenvector
change measure from equation~\eqref{eq:power-convergence}.
In Figure~\ref{fig:experiment-power}, we first show how
the maximum absolute value of the Fourier coefficients
in $\cel{\rho}$ behaves (the red line).
Formally, this measure is $\normof[1]{\cft(\cel{\rho})}$, i.e.,\ the
maximum element in the diagonal matrix.  We also show 
how each Fourier component of the eigenvalue
converges to the Fourier components of $\clambda_1$ (each gray line).
That is, let $\cel{\mu}\itn{i}$
be the Rayleigh quotient ${\cvx\itn{i}}^* \circ \cmA \circ \cvx\itn{i}$
at the $i$th iteration.  Then these lines are the $N$ values of
$\diag(\cft(\tabs(\cel{\mu}\itn{i} - \clambda_1)))$.
The results validate the
theoretical predictions, and the eigenvalue does
indeed converge to $\clambda_1$.


\subsection{The Arnoldi method}

We next investigate computing $\cvu$ using the Arnoldi
method applied to $\cmA$.  In this case, $f(x,y)$ to
be $1$ at $x_{25},y_2$ and $0$ elsewhere.  This corresponds
to a single non-zero in $\tvec(\cvf)$ with value $1/N^2$.
With this right-hand side,
the procedure we use is identical
to an unoptimized GMRES procedure.  Given a $t$-step Arnoldi
factorization starting from $\cvf$, we estimate
\[ \cvu\itn{t} \approx
 \cmQ_t \circ \mathop{\mathrm{arg\,min}}_{\cvy \in\KK_k} \normof{\cmH_{t+1,t} \circ \cvy - \cbeta \circ \cve_1},
\]
where $\cbeta=\normof{\cvf}$.
We solve the least-squares problem by solving each problem
independently in the Fourier space -- as has become standard throughout
this paper.  Let $\cel{\rho} = \normof{\cvf - \cmA \circ \cvu\itn{t}}$.
Figure~\ref{fig:experiment-arnoldi} shows (in red) the
magnitude of the residual as a function of the Arnoldi factorization
length $t$, which is $\normof[1]{\cft(\cel{\rho})}$.  
The figure also shows (in gray) the magnitude of the error in the
$j$th Fourier coefficient; these lines are the $N$ values of
$\diag(\cft(\normof{\cvu - \cvu\itn{t}}))$.
In Fourier space,
these values measure the error in  each individual Arnoldi
process.

What the figure shows is that the residual suddenly converges
at the $26$th iteration.  
This is in fact theoretically expected \cite{saad2003-book}, because
each matrix $\mAhat_j$ has $N/2+1=26$ distinct eigenvalues.
In terms of measure the individual errors (the gray lines), 
some converge
rapidly, and some do not seem to converge at all until the
Arnoldi process completes at iteration 26.  This exemplifies
how the overall behavior is governed by the worst behavior
in any of the independent Arnoldi processes.

\begin{figure}[t]
\begin{minipage}{0.46\linewidth}
\includegraphics[width=\linewidth]{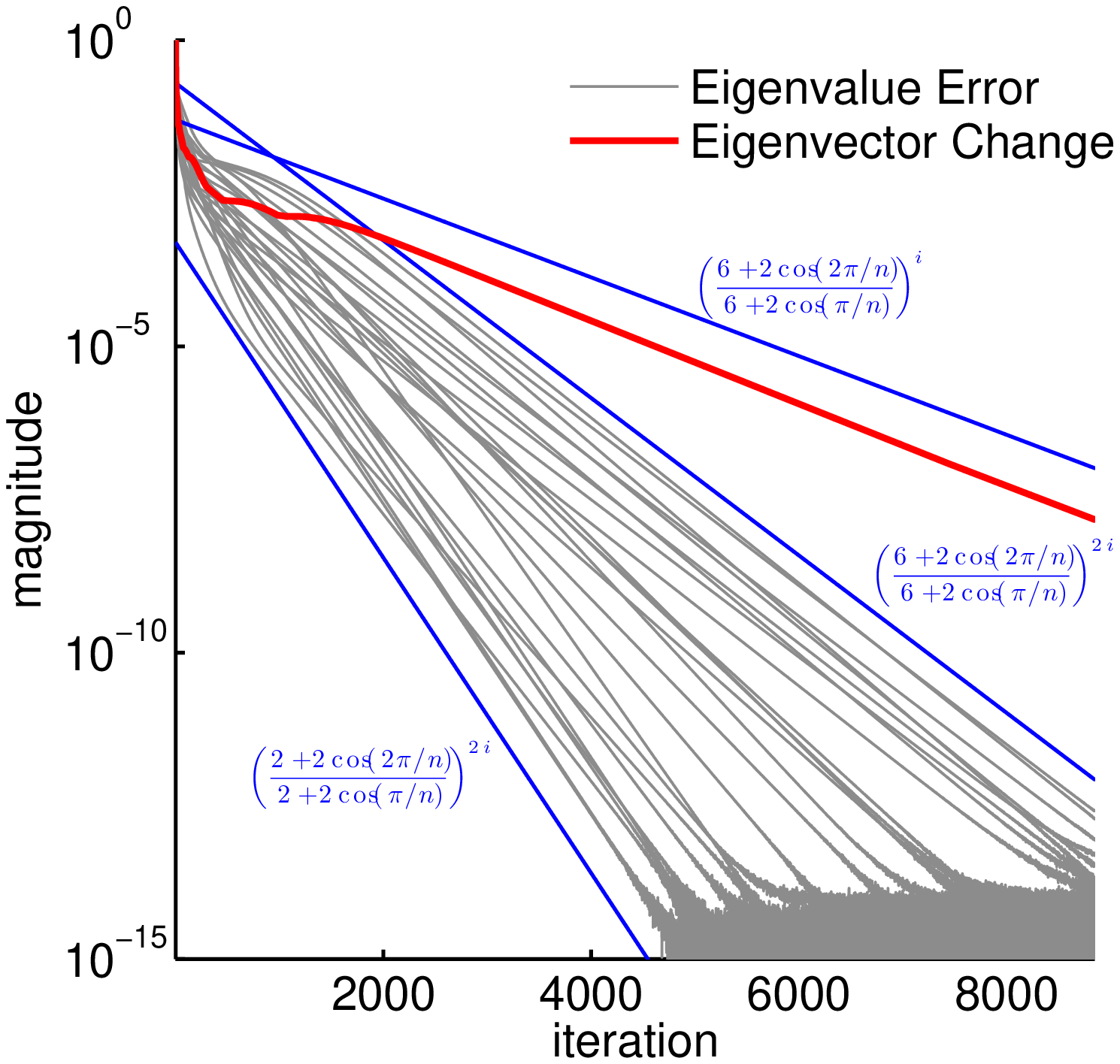}
\caption{The convergence behavior of the power
method in the circulant algebra. The gray lines show
the error in the each eigenvalue in Fourier space.
These curves track the predictions made based
on the eigenvalues as discussed in the text.
The red line shows the magnitude of the change
in the eigenvector.  We use this as the stopping
criteria.  It also decays as predicted by the
ratio of eigenvalues. The blue fit lines have
been visually adjusted to match the behavior
in the convergence tail.   }
\label{fig:experiment-power}
\end{minipage}
\hfil
\begin{minipage}{0.46\linewidth}
\includegraphics[width=\linewidth]{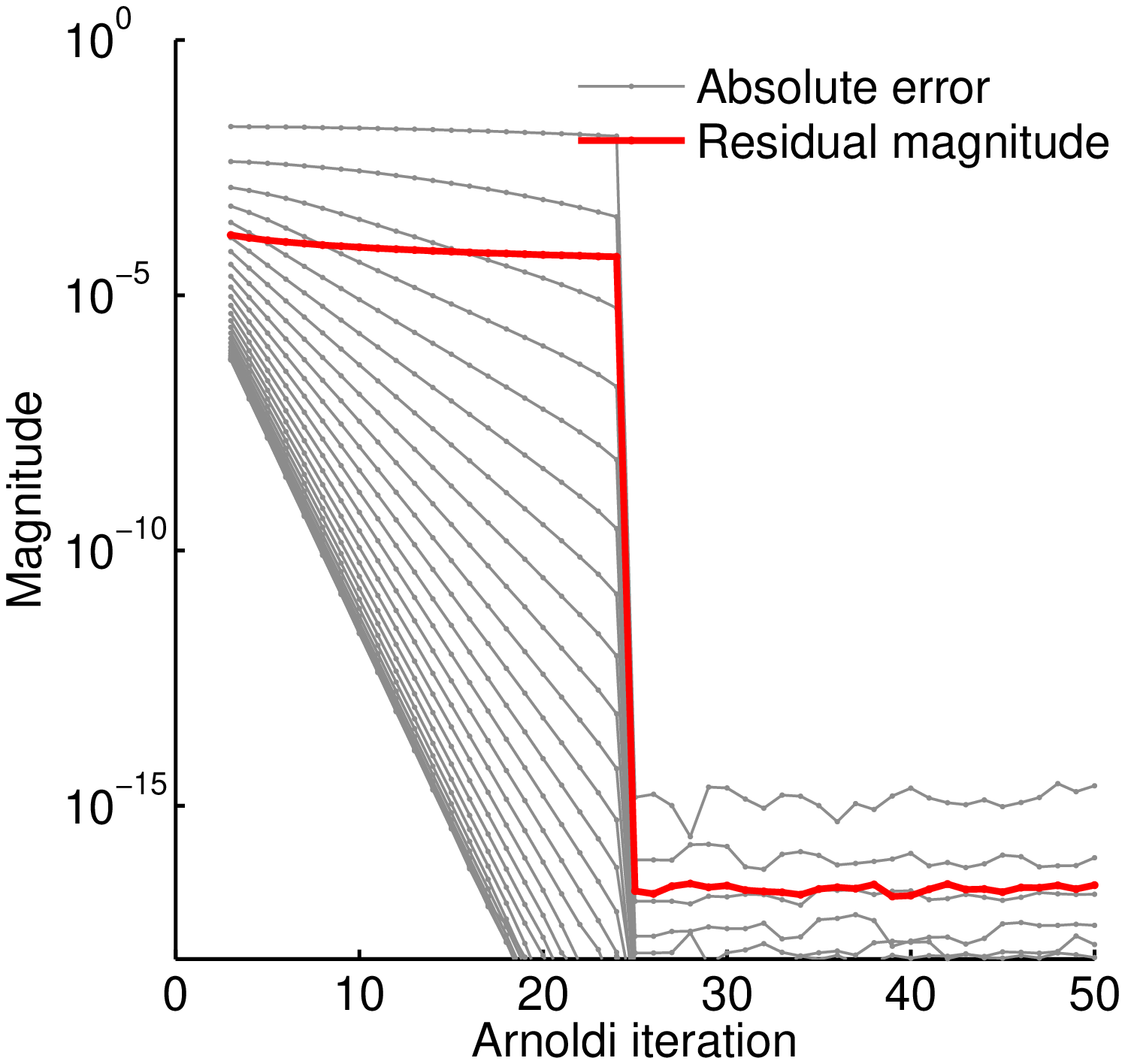}
\caption{The convergence behavior of a
GMRES procedure using the circulant Arnoldi process.
The gray lines show the error in each Fourier
component and the red line shows the magnitude
of the residual.  We observe poor convergence
in one Fourier component; until the Arnoldi basis
captures all of the eigenvalues after $N/2+1=26$ iterations.
These results show how the two computations are
performing individual power methods or Arnoldi processes
in Fourier space.}
\label{fig:experiment-arnoldi}
\end{minipage}

\end{figure}

\section{Summary}
\label{sec:conclusion}

%

We have extended the circulant algebra, introduced
by \citet{kilmer2008-circ-tensor-svd}, with new
operations 
to pave the way for
iterative algorithms, such as the power method
and the Arnoldi iteration that we introduced.
These operations
provided key tools to build a \Matlab package 
to investigate these iterative algorithms for this paper.
Furthermore, we used the fast Fourier transform to accelerate 
these operations, 
and as a key analysis tool for eigenvalues
and eigenvectors. 
In the Fourier space the operations and algorithms decouple into individual problems.
We observed this for the inner product, eigenvalues,
eigenvectors, the power method, and the Arnoldi iteration.
We also found that this decoupling explained the behavior 
in a numerical example. 

Given that decoupling is such a powerful computational
and analytical tool, a natural question that arises is when 
it is useful to employ the original circulant formalism, rather than work in the Fourier space.
For dense computations, it is likely that working entirely
in Fourier space is a superior approach.  However, for sparse computations,
such as the system $\cmA \circ \cvu = \cvf$ explored in
Section~\ref{sec:example}, such a conclusion is unwarranted.
That example is sparse both in the matrix over circulants,
and in the individual circulant arrays.  When thought of
as a cube of data, it is sparse in any way of slicing it
into a matrix. After this matrix $\cmA$ is transformed
to the Fourier space, it loses its sparsity in the
third-dimension; each sparse scalar $\cel{A}_{i,j}$
becomes a dense array. In this case, retaining the coupled
nature of the operations and even avoiding most of the
Fourier domain may allow better scalability in terms
of total memory usage.

An interesting topic for future work is exploring other
rings besides the ring of circulants.  One obvious
candidate is the ring of symmetric circulant matrices.
In this ring, the Fourier coefficients are always
real-valued.  Using this ring avoids the algebraic and computational complexity
associated with complex values in the Fourier transforms.

We have made all of code and experiments available
to use and reproduce our results:
\url{http://stanford.edu/~dgleich/publications/2011/codes/camat}.

%
%
%
%
%
%
%

\section*{References}
\bibliographystyle{elsarticle-harv}
\bibliography{all-bibliography}

\appendix

\section{The circulant scalar magnitude}
\label{sec:magnitude}

This section describes another operation
we extended to the circulant algebra.  Eventually,
we replaced it with our ordering (Definition~\ref{def:ordering}),
which is more powerful as we justify below.
However, it plays
a role in our \Matlab package, and thus we describe
the rationale for our choice of magnitude function here.

For scalars in $\RR$, the
magnitude is often called the absolute value.
Let $\alpha, \beta \in \RR$.  The absolute value
has the the property $\absof{\alpha \beta} = \absof{\alpha} \absof{\beta}$.
We have already introduced an absolute value function,
however.  Here, we wish to define a notion of magnitude that
produces a scalar in $\RR$ to indicate the size of an
element.  Such a function will have norm-like flavor because
it must represent the aggregate magnitude of $k$ values with
a single real-valued number.  Thus, finding a function to satisfy
$\absof{\calpha \circ \cbeta} = \absof{\calpha} \absof{\cbeta}$
exactly is not possible.  Instead, we seek a
function $g : \KK_k \mapsto \RR $ such that
\begin{enumerate}
 \item $g(\calpha) = 0$ if and only if $\calpha = 0$,
 \item $g(\calpha \circ \cbeta) \le g(\calpha) g(\cbeta)$,
 \item $g(\calpha + \cbeta) \le g(\calpha) + g(\cbeta)$.
\end{enumerate}
The following result shows that there is a large class of
such magnitude functions.

\begin{result}
 Any sub-multiplicative matrix norm $\normof{\mA}$ defines a magnitude
 function $ g(\calpha) = \normof{\tcirc(\calpha)}. $
\end{result}

This result follows because the properties of the function
$g$ are identical to the requirements of a sub-multiplicative
matrix norm applied to $\tcirc(\calpha)$.
Any matrix norm induced by a vector norm is sub-multiplicative.
In particular, the matrix $1$, $2$, and $\infty$ norms are
all sub-multiplicative.
Note that for circulant matrices
both the matrix $1$ and $\infty$ norms are equal to the 1-norm
of any row or column, i.e., $\normof[1]{\tvec(\alpha)}$ is a valid
magnitude.  Surprisingly, the 2-norm of the vector of parameters,
that is $\normof[2]{\tvec(\calpha)}$, is not.  For a counterexample,
let $\calpha = \csbmat{ 1 & 2 }, \cbeta = \csbmat{ 2 & 4 }$.  Then
$\calpha \circ \cbeta = \csbmat{ 8 & 10 }$ and
$\normof[2]{\tvec(\calpha \circ \cbeta)} = \sqrt{164} >
\normof[2]{\tvec(\calpha)} \normof[2]{\tvec(\cbeta)} = \sqrt{100}.$
For many practical computations, we use the matrix $2$-norm
of $\tcirc(\calpha)$ as the magnitude function.  Thus,
\[ \absof{\calpha} \eqdef \normof[2]{\tcirc(\calpha)} = \normof[1]{\cft(\calpha)}. \]
This choice has the following relationship with our ordering:
\[ \tabs(\calpha) \le \tabs(\cbeta) \qquad  \Rightarrow \qquad \absof{\calpha} \le \absof{\cbeta}. \]
We implement this operation as the \texttt{mag} function
in our \Matlab package.

\end{document}